\newtheorem{thm}{Theorem}[section]
\newtheorem{lemma}[thm]{Lemma}
\newtheorem{remark}[thm]{Remark}
\newtheorem{prop}[thm]{Proposition}
\title[Noncommutative Julia-Wolff-Carath\'eodory Theorem]{A noncommutative
version of the Julia-Wolff-Carath\'eodory Theorem}
\author{Serban Teodor Belinschi}
\address{CNRS, Institut de Math\'ematiques de Toulouse\\
118 Route de Narbonne\\
F-31062 Toulouse Cedex 09, France.}
\email{serban.belinschi@math.univ-toulouse.fr}
\thanks{}
\begin{document}

\begin{abstract}
The classical Julia-Wolff-Carath\'eodory Theorem characterizes the behaviour
of the derivative of an analytic self-map of a unit disc or of a 
half-plane of the complex plane at certain boundary points. We prove a 
version of this result that applies to noncommutative self-maps of 
noncommutative half-planes in von Neumann algebras at points of the
distinguished boundary of the domain. Our result, somehow 
surprisingly, relies almost entirely on simple geometric properties of 
noncommutative half-planes, which are quite similar to the geometric 
properties of classical hyperbolic spaces.

\end{abstract}

\maketitle

\section{Introduction}

The classical Julia-Wolff-Carath\'eodory Theorem describes the behaviour of 
the derivative of an analytic self-map of the unit disc $\mathbb D$
or of the upper half-plane $\mathbb C^+$ of the complex plane $\mathbb 
C$ at certain boundary points. 
Numerous generalizations, to self-maps of balls or polydisks in 
$\mathbb C^n$, analytic functions with values in spaces of linear
operators, analytic self-maps on domains in spaces of operators or
in more general Banach spaces etc. - see for example
\cite{Rudin,Fan,Jafari,Wlo,Abate,Mellon,MM,AbateRaissy} (the list is not exhaustive) -
are known. This note gives a version of this theorem for noncommutative
self-maps of the noncommutative upper half-plane of a von Neumann 
algebra $\mathcal A$. The result builds on the recent literature in the
field - see \cite{AMcY,ATY2,PTD}, and falls under the programme aiming
to find the noncommutative versions of classical complex analysis results - see
for example \cite{AKV0,AKV,AMc1,AMc3,AMc2}.

In the second section we state our main result, and provide the 
required background. The third section is dedicated to proving
a Schwarz lemma-type result for noncommutative functions. In
this same section we give a simple (not necessarily original,
though) proof of the classical Julia-Wolff-Carath\'eodory Theorem 
in order to make this article self-contained, and some lemmas that
make use of it. Finally, in the last section we prove our main result.

\section{Noncommutative functions and the Julia-Carath\'eodory Theorem}

\subsection{Noncommutative functions}
Noncommutative sets and functions originate in \cite{taylor0,taylor}. We 
largely follow \cite{ncfound} in our presentation below. We refer to this
excellent monograph for details on, and proofs of, the statements 
that follow.

First a notation: if $S$ is a nonempty set, we denote by $M_{m\times n}
(S)$ the set of all matrices with $m$ rows and $n$ columns having 
entries from $S$. For simplicity, we let $M_n(S):=M_{n\times n}(S)$. 
Given C${}^*$-algebra $\mathcal A$, a {\em noncommutative set} is a 
family $\Omega:=(\Omega_n)_{n\in\mathbb N}$
such that
\begin{enumerate}
\item[(a)] for each $n\in\mathbb N$, $\Omega_n\subseteq M_n
(\mathcal A);$
\item[(b)] for each $m,n\in\mathbb N$, we have $\Omega_m\oplus
\Omega_n\subseteq\Omega_{m+n}$.
\end{enumerate}
The noncommutative set $\Omega$ is called {\em right admissible} if in
addition the condition (c) below is satisfied:
\begin{enumerate}
\item[(c)] for each $m,n\in\mathbb N$ and $a\in\Omega_m,c\in
\Omega_n,w\in M_{m\times n}(\mathcal A)$, there is an $\epsilon>0$
such that $\left(\begin{array}{cc}
a & zw\\
0 & c\end{array}\right)\in\Omega_{m+n}$ for all $z\in\mathbb C,
|z|<\epsilon$.
\end{enumerate}
Given C${}^*$-algebras $\mathcal{A,C}$ and a noncommutative
set $\Omega\subseteq\coprod_{n=1}^\infty M_n(\mathcal A)$, a {\em 
noncommutative function} is a family $f:=(f_n)_{n\in\mathbb N}$
such that $f_n\colon\Omega_n\to M_n(\mathcal C)$ and
\begin{enumerate}
\item $f_m(a)\oplus f_n(c)=f_{m+n}(a\oplus c)$ for all
$m,n\in\mathbb N$, $a\in\Omega_m,c\in\Omega_n$;
\item for all $n\in\mathbb N$, $f_n(T^{-1}aT)=T^{-1}f_n(a)T$ whenever
$a\in\Omega_n$ and $T\in GL_n(\mathbb C)$ are such that $T^{-1}aT$
belongs to the domain of definition of $f_n$.
\end{enumerate}
These two conditions are equivalent to the single condition
\begin{enumerate}
\item[(A)] For any $m,n\in\mathbb N$, $a\in\Omega_m,c\in\Omega_n$, 
$S\in M_{m\times n}(\mathbb C)$, one has
$$
aS=Sc\implies f_m(a)S=Sf_n(c).
$$
\end{enumerate}
We shall refer to the indices $n$ of $\Omega_n$ or of $f_n$ as the ``levels''
of the noncommutative set $\Omega$ or of the noncommutative function $f$.

A remarkable result (see \cite[Theorem 7.2]{ncfound}) states that, 
under very mild conditions on $\Omega$, local boundedness for $f$
implies each $f_n$ is analytic as a map between Banach spaces. Indeed,
a hint towards the proof of this result is the following essential
property of noncommutative functions: if $\Omega$ is admissible,
$a\in \Omega_n, c\in\Omega_m, b\in M_{n\times m}
(\mathcal A)$, such that $\left(\begin{array}{cc}
a & b \\
0 & c
\end{array}\right)\in\Omega_{n+m}$, then there exists a linear map
$\Delta f_{n,m}(a,c)\colon M_{n\times m}(\mathcal A)\to M_{n\times m}
(\mathcal C)$ such that 
\begin{equation}\label{FDQ}
f_{n+m}\left(\begin{array}{cc}
a & b \\
0 & c
\end{array}\right)=\left(\begin{array}{cc}
f_n(a) & \Delta f_{n,m}(a,c)(b) \\
0 & f_m(c)
\end{array}\right).
\end{equation}
Obviously, this implies in particular that $f_{n+m}$ extends to
the set of all elements $\left(\begin{array}{cc}
a & b \\
0 & c
\end{array}\right)$ such that $a\in \Omega_n, c\in\Omega_m, 
b\in M_{n\times m}(\mathcal A)$ (see \cite[Section 2.2]{ncfound}).
Two properties of this operator that are important for us are
\begin{equation}\label{FDC}
\Delta f_{n,n}(a,c)(a-c)=f(a)-f(c)=\Delta f_{n,n}(c,a)(a-c),\quad
 \Delta f_{n,n}(a,a)(b)=f_n'(a)(b),
\end{equation}
the classical Frechet derivative of $f_n$ in $a$ aplied to the element
$b\in M_n(\mathcal A)$. Moreover, $\Delta f_{n,m}(a,c)$ as functions of $a$
and $c$, respectively, satisfy properties similar to the ones
described in items (1), (2) above (see \cite[Sections 2.3--2.5]{ncfound}
for details).
For convenience, from now on we shall suppress the indices denoting
the levels for noncommutative functions, as it will almost always
be obvious from the context.

We provide three examples of noncommutative sets: 
\begin{enumerate}
\item[(i)] The noncommutative upper half-plane 
$H^+(\mathcal A)=(H^+(M_n(\mathcal A)))_{n\in\mathbb N}$, 
where $H^+(M_n(\mathcal A))=\{b\in M_n(\mathcal A)\colon\Im b>0\}$ (here
$\Im b=\frac{b-b^*}{2i},\Re b=\frac{b+b^*}{2}$), 
\item[(ii)] The set of nilpotent matrices with entries from $\mathcal A
$, and
\item[(iii)] The unit ball $(B(M_n(\mathcal A)))_{n\in\mathbb N}$, 
where $B(M_n(\mathcal A))=\{b\in M_n(\mathcal A)\colon\|b\|<1\}$.
\end{enumerate}
Our paper will focus on the first example. 

As the domains we consider in this paper are mostly 
noncommutative subsets of von Neumann algebras given
by an order relation, we give below a few of the well-known
results we use systematically in the rest of the paper. For them, we refer 
to \cite{Bruce,Paulsen,SZ}. First, recall that for any C${}^*$-algebra
(hence, in particular, von Neumann algebra) $\mathcal A$, if
$x\in\mathcal A$, then $\|x\|^2=\|x^*\|^2=\|x^*x\|=\|xx^*\|$.
For a selfadjoint element $x=x^*\in\mathcal A$, $\|x\|$ is equal
to the spectral radius of $x$. We say that $x\ge0$ in $\mathcal A$
if $x=x^*$ and the spectrum of $x$ is included in $[0,+\infty)$.
Equivalently, if $\mathcal H$ is the Hilbert space on which 
$\mathcal A$ acts as a von Neumann algebra, then a selfadjoint
$x\in\mathcal A$ is greater than or equal to zero if  and
only if $\langle x\xi,\xi\rangle\ge0$ for all $\xi\in\mathcal H$.
We say that $x>0$ means that $x\ge0$ and $x$ is invertible (i.e.
its spectrum is included in $(0,+\infty)$). We say $x\ge y$
if $x-y\ge0$, and similarly for ``$>$.'' In particular,
it follows that $xx^*\leq\|x\|^2\cdot1_\mathcal A$
and $x^*x\leq\|x\|^2\cdot1_\mathcal A$. Clearly, for 
and $\varepsilon\in(0,+\infty)$, 
$xx^*<(\|x\|^2+\varepsilon)\cdot1_\mathcal A$, with strict inequality
achieved only when $\varepsilon>0$.

As proved in \cite[Lemma 3.1]{Paulsen}, 
$$
\left(\begin{array}{cc}
1 & a\\
a^* & 1
\end{array}\right)\ge0\text{ in }M_2(\mathcal A)\iff \|a\|\leq1.
$$
We claim that 
$$
\left(\begin{array}{cc}
1 & a\\
a^* & 1
\end{array}\right)>0 \text{ in }M_2(\mathcal A)\iff \|a\|<1.
$$
Indeed, if $\|a\|<1$, then $1-aa^*$ and $1-a^*a$ are 
invertible in $\mathcal A$ and 
$$
\left(\begin{array}{cc}
1 & a\\
a^* & 1
\end{array}\right)^{-1}=\left(\begin{array}{cc}
(1-aa^*)^{-1} & -(1-aa^*)^{-1}a\\
-a^*(1-aa^*)^{-1} & (1-a^*a)^{-1}
\end{array}\right)\ \text{in }M_2(\mathcal A).
$$
Conversely, if $\|a\|=1$, then for any $\varepsilon>0$ there exists
$\xi_\varepsilon,\eta_\varepsilon\in\mathcal H$ of norm one such that 
$\langle a\eta_\varepsilon,\xi_\varepsilon\rangle>
1-\varepsilon.$ Then 
$$
\left|\left\langle
\left(\begin{array}{cc}
1 & a\\
a^* & 1
\end{array}\right)\left[\begin{array}{c}
\xi_\varepsilon\\
\eta_\varepsilon
\end{array}\right],\left[\begin{array}{c}
\xi_\varepsilon\\
\eta_\varepsilon
\end{array}\right]
\right\rangle\right|=\|\xi_\varepsilon\|^2_2+\|\eta_\varepsilon\|^2_2
-2\langle a\eta_\varepsilon,\xi_\varepsilon\rangle<2\varepsilon,
$$
so that zero belongs to the spectrum of $\left(\begin{array}{cc}
1 & a\\
a^* & 1
\end{array}\right)$. This proves our claim.

Observe also that for any selfadjoint $x\in\mathcal A$, we have $x>0$ 
if and only if for any invertible $a\in\mathcal A$,
we have $a^*xa>0$. Indeed, one implication is obvious. Conversely,
if $a$ is invertible and $a^*xa>0$, then there is an $\varepsilon_a\in(0,+\infty)$ 
such that $a^*xa>\varepsilon_a\cdot1_\mathcal A$.
For any $\xi\in\mathcal H$, 
$\langle x\xi,\xi\rangle=\langle x
a(a^{-1}\xi),a(a^{-1}\xi)\rangle=\langle a^*x
a(a^{-1}\xi),(a^{-1}\xi)\rangle>\varepsilon_a\|(a^{-1}\xi)\|_2^2\ge
\varepsilon_a\|a\|^{-2}\|\xi\|_2^2,$ independently of $\xi$, so that 
$x\ge\varepsilon_a\|a\|^{-2}\cdot1_\mathcal A>0.$ 
We use these last two results to conclude that
$$
\left(\begin{array}{cc}
u & v\\
v^* & w
\end{array}\right)>0\text{ in }M_2(\mathcal A)\iff u,w>0\text{ in }\mathcal A\text{ and }
\left\{\begin{array}{c}
u>vw^{-1}v^* \\
\text{and/or}\\
w>v^*u^{-1}v 
\end{array}\right..
$$
This follows from the above by writing 
$$
\left(\begin{array}{cc}
u & v\\
v^* & w
\end{array}\right)=
\left(\begin{array}{cc}
u^\frac12 & 0\\
0 & w^\frac12
\end{array}\right)
\left(\begin{array}{cc}
1 & u^{-\frac12}vw^{-\frac12}\\
w^{-\frac12}v^*u^{-\frac12} & 1
\end{array}\right)
\left(\begin{array}{cc}
u^\frac12 & 0\\
0 & w^\frac12
\end{array}\right)
$$
and recalling the chain of equivalences $\|u^{-1/2}vw^{-1/2}(u^{-1/2}vw^{-1/2})^*\|<1\iff
\|(u^{-1/2}vw^{-1/2})^*u^{-1/2}vw^{-1/2}\|<1\iff
u^{-1/2}vw^{-1/2}(u^{-1/2}vw^{-1/2})^*<1\iff (u^{-1/2}vw^{-1/2})^*u^{-1/2}vw^{-1/2}<1.$
We shall use these facts below without further referencing to them.

\subsection{The Julia-Wolff-Carath\'eodory Theorem, classical and noncommutative}
We state the classical Julia-Wolff-Carath\'eodory Theorem for analytic
self-maps of the upper half-plane $\mathbb C^+$ at a point of
the real line $\mathbb R$. In the following we denote by $\displaystyle
\lim_{\stackrel{ z\longrightarrow\alpha}{{\sphericalangle}}}$ the nontangential 
limit at a point $\alpha\in\mathbb R$ (see, for ex. \cite{garnett}).
\begin{thm}\label{JC}
Let $f\colon\mathbb C^+\to\mathbb C^+$ be a nonconstant analytic function and $\alpha\in
\mathbb R$ be fixed. 
\begin{enumerate}
\item Assume that
\begin{equation}\label{3}
c:=\liminf_{z\to\alpha}\frac{\Im f(z)}{\Im z}<\infty.
\end{equation}
Then $f(\alpha):=\displaystyle\lim_{\stackrel{ z\longrightarrow\alpha}{{
\sphericalangle}}}f(z)$ exists and belongs to
$\mathbb R$, and 
\begin{equation}\label{4}
\lim_{\stackrel{ z\longrightarrow\alpha}{{
\sphericalangle}}}f'(z)=
\lim_{\stackrel{ z\longrightarrow\alpha}{{
\sphericalangle}}}\frac{f(z)-f(\alpha)}{z-\alpha}=
\lim_{y\downarrow0}\frac{\Im f(\alpha+iy)}{y}=c.
\end{equation}
\item Assume that $\displaystyle\lim_{y\downarrow0}f(\alpha+iy)=:
f(\alpha)$ exists and belongs to $\mathbb R$. If 
$$
\lim_{\stackrel{ z\longrightarrow\alpha}{{
\sphericalangle}}}\frac{f(z)-f(\alpha)}{z-\alpha}=c\in\mathbb C,
$$
then $c\in(0,+\infty)$ and 
$$
c=\liminf_{z\to\alpha}\frac{\Im f(z)}{\Im z}=
\lim_{\stackrel{ z\longrightarrow\alpha}{{
\sphericalangle}}}f'(z).
$$
\item Assume that $\displaystyle\lim_{\stackrel{z\longrightarrow
\alpha}{{\sphericalangle}}}f'(z)=c\in\mathbb C$ and 
$\displaystyle\lim_{\stackrel{ z\longrightarrow\alpha}{{
\sphericalangle}}}f(z)=f(\alpha)\in\mathbb R$. Then
$$
c=\liminf_{z\to\alpha}\frac{\Im f(z)}{\Im z}=
\lim_{\stackrel{ z\longrightarrow\alpha}{{
\sphericalangle}}}\frac{f(z)-f(\alpha)}{z-\alpha}\in\mathbb R.
$$
\end{enumerate}
\end{thm}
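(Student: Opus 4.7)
My plan is to prove (1) first and then derive (2) and (3) by reduction. The key geometric input is the half-plane Schwarz--Pick inequality: for any analytic $f\colon\mathbb C^+\to\mathbb C^+$ and $z,w\in\mathbb C^+$,
$$
\frac{|f(z)-\overline{f(w)}|^2}{\Im f(z)\,\Im f(w)}\le\frac{|z-\bar w|^2}{\Im z\,\Im w},
$$
together with its scalar companion $|f'(z)|\,\Im z\le\Im f(z)$.

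For (1), the first step is Julia's lemma: picking a sequence $z_n\to\alpha$ with $\Im f(z_n)/\Im z_n\to c$ and substituting $w=z_n$ in Schwarz--Pick, the finiteness of $c$ together with $\Im z_n\to 0$ forces (along a subsequence) $f(z_n)\to\beta\in\mathbb R$, and passing to the limit yields the horodisc inequality
$$
\frac{|f(z)-\beta|^2}{\Im f(z)}\le c\,\frac{|z-\alpha|^2}{\Im z},\qquad z\in\mathbb C^+.
$$
From this one reads off that $f(z)\to\beta$ nontangentially at $\alpha$, and that both the difference quotient $F(z):=(f(z)-\beta)/(z-\alpha)$ and the derivative $f'$ are uniformly bounded on each Stolz angle $S_K=\{|z-\alpha|\le K\Im z\}$.

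Next, Schwarz--Pick applied to pairs $(\alpha+iy_1,\alpha+iy_2)$ shows $y\mapsto\Im f(\alpha+iy)/y$ is monotone non-increasing on $(0,\infty)$; since the horodisc inequality at $z=\alpha+iy$ already gives $\Im f(\alpha+iy)\le cy$, its radial limit at $0^+$ is exactly $c$. The inequality $|F|^2\le c\,\Im f/\Im z$ then forces $(\Im F(\alpha+iy))^2\le\Re F(\alpha+iy)(c-\Re F(\alpha+iy))\to 0$, yielding $F(\alpha+iy)\to c$ radially as a complex number. To upgrade to the nontangential limit I would use a Vitali argument on the dilates $F_\lambda(z):=F(\alpha+\lambda(z-\alpha))$, $\lambda\downarrow 0$: these are uniformly bounded on each $S_K$ and converge to $c$ pointwise on the imaginary ray, hence converge to the constant $c$ uniformly on compacta of $S_K$, giving $F(z)\to c$ nontangentially. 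Differentiating $f=\beta+(z-\alpha)F$ and using $F_\lambda'\to 0$ on compacta (by Weierstrass) then delivers $f'(z)\to c$ nontangentially.

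For (2), the real part of the nontangential limit $c$ of $F$ equals $\lim_{y\downarrow 0}\Im f(\alpha+iy)/y\ge 0$, hence $\liminf\Im f(z)/\Im z\le\Re c<\infty$ and (1) applies; uniqueness of the nontangential limit identifies the liminf with $c$, so $c\in\mathbb R$, and the horodisc inequality rules out $c=0$ (which would force $f\equiv\beta$, contradicting nonconstancy). For (3), L'H\^opital's rule applied separately to the real and imaginary parts of $y\mapsto f(\alpha+iy)-f(\alpha)$ gives $(f(\alpha+iy)-f(\alpha))/(iy)\to c$, placing us under the hypothesis of (2). The main technical obstacle is the Vitali step bridging radial to nontangential limits; everything else is a direct consequence of the Schwarz--Pick contraction.
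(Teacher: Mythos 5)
Your proposal follows essentially the same route as the paper: the half-plane Schwarz--Pick inequality yields Julia's horodisc inequality, from which you extract boundedness of the difference quotient and of $f'$ on Stolz angles, the radial limit $\lim_{y\downarrow 0}\Im f(\alpha+iy)/y=c$, and then the nontangential limits via a normal-families/Vitali argument (the paper cites a Garnett exercise for that last step rather than spelling it out). One small repair is needed in (3): L'H\^opital along the imaginary ray only gives the \emph{radial} limit of the difference quotient, whereas the hypothesis of (2) is a nontangential limit, so you should either reduce directly to (1) by noting $\Im f(\alpha+iy)/y\to\Re c<\infty$ (which is what the paper does, via the mean value theorem), or first upgrade the radial limit to a nontangential one using the Stolz-angle bound on $(f(z)-f(\alpha))/(z-\alpha)$ that follows from the boundedness of $f'$.
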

The noncommutative version of this theorem becomes quite
obvious in light of \eqref{FDC} and of the formulations of the
corresponding main result from \cite{Wlo}  as well as the recent work
\cite{PTD}. In the following, when
we make a statement about a completely positive map, we usually write
the statement for level one, and, unless the contrary is explicitly
stated, we mean that the property in question holds for all levels $n$.
Thus, for example, the statement
$$
\lim_{\stackrel{ z\longrightarrow0}{{
\sphericalangle}}}f'(\alpha+zv):=f'(\alpha)
$$
exists and is completely positive for $\alpha=\alpha^*\in\mathcal A$ and $v>0$ 
means that for any $n\in\mathbb N$ and any $v\in M_n(\mathcal A)$, 
$$
\lim_{\stackrel{ z\longrightarrow0}{{
\sphericalangle}}}f'(\alpha\otimes1_n+zv)=f'(\alpha\otimes1_n)=
f'(\alpha)\otimes{\rm Id}_n
$$
is a positive map on $M_n(\mathcal A)$.

\begin{thm}\label{Main}
Let $\mathcal A$ be a von Neumann algebra and $f\colon
H^+(\mathcal A)\to H^+(\mathcal A)$ be a noncommutative analytic map. Fix 
$\alpha=\alpha^*\in\mathcal A$.
\begin{enumerate}
\item Assume that for any $v\in \mathcal A,
v>0$ and any state $\varphi$ on $\mathcal A$,
\begin{equation}\label{5}
\liminf_{z\to0,z\in\mathbb C^+}\frac{\varphi(\Im f(\alpha
+zv))}{\Im z}<\infty.
\end{equation}
Then there exists $c=c(v)\in\mathcal A$, $c>0$ such that
\begin{equation}\label{6}
\lim_{y\downarrow0}\frac{\Im f(\alpha+iyv)}{y}=c
\end{equation}
in the strong operator {\rm (so)} topology. Moreover, $\displaystyle\lim_{\stackrel{z\longrightarrow0}{{\sphericalangle}}}
f(\alpha+zv)=f(\alpha)$ exists, does not depend on $v$ and is selfadjoint. The limits
\begin{equation}\label{7}
\lim_{\stackrel{ z\longrightarrow0}{{
\sphericalangle}}}\Delta f(\alpha+zv_1,\alpha+zv_2)\quad\text{and}\quad
\lim_{\stackrel{z\longrightarrow0}{{\sphericalangle}}}f'(\alpha+zv)
\end{equation}
exist pointwise  {\rm wo} for any $v,v_1,v_2>0$, and
$\displaystyle\lim_{\stackrel{ z\longrightarrow0}{{
\sphericalangle}}}f'(\alpha+zv)(v)=c(v)$. All statements remain true for any $n\in\mathbb N$,
$v,v_1,v_2>0$ in $M_n(\mathcal A)$ and $\alpha$ replaced by $\alpha\otimes 1_n$.

\item[(1')] Assume in addition to the hypothesis \eqref{5} that for any $v,w>0$
in $\mathcal A$ and any state $\varphi$ on $\mathcal A$, the gradient of the two-variable 
complex function $\{(z,\zeta)\in\mathbb C^2\colon\Im(zv+\zeta w)>0\}\ni(z,\zeta)\mapsto\varphi(f(\alpha+zv+\zeta w))
\in\mathbb C^+$ admits the limit
$$
\lim_{\stackrel{y_1,y_2\downarrow0}{(y_1,y_2)\in[0,1)^2\setminus\{(0,0)\}}}
\left(\varphi(f'(\alpha+iy_1v+iy_2w)(v)),\varphi(f'(\alpha+iy_1v+iy_2w)(w))\right).
$$
Then the limits \eqref{7}
are equal to each other, completely positive and do not depend on $v,v_1,v_2$.

\item Assume that the pointwise {\rm wo} limit
$\displaystyle\lim_{y\downarrow0}f'(\alpha+iyv):=f'(\alpha)$
exists for any $v>0$, does not depend on $v$ and $f'(\alpha)$ is
a completely bounded operator on $\mathcal A$. Then $f'(\alpha)$
is completely positive, $\displaystyle\lim_{\stackrel{z
\longrightarrow0}{{\sphericalangle}}}f(\alpha+zv):=f(\alpha)$
exists, does not depend on $v$ and is selfadjoint, and 
$$
f'(\alpha)(v)={\rm so\text{-}}
\lim_{\stackrel{ z\longrightarrow0}{{
\sphericalangle}}}\frac{\Im f(\alpha+iyv)}{y}\quad\text{for any }
v>0.
$$

\end{enumerate}
\end{thm}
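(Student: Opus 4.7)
The plan is to reduce to the classical scalar Julia-Wolff-Carathéodory Theorem \ref{JC} by testing against states on $\mathcal A$ along one-parameter directions in $H^+(\mathcal A)$, and then to promote the scalar conclusions to operator statements by combining the Schwarz-lemma-type results of Section 3 with the difference-quotient formulas \eqref{FDC} and \eqref{FDQ}.

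For part (1), fix $v>0$ and a state $\varphi$ on $\mathcal A$: the map $g_{\varphi,v}(z):=\varphi(f(\alpha+zv))$ is an analytic self-map of $\mathbb C^+$ to which hypothesis \eqref{5} supplies the classical liminf condition \eqref{3} at $0$. Theorem \ref{JC}(1) then yields the scalar limits $\lim_{y\downarrow0}\varphi(\Im f(\alpha+iyv))/y=:c_\varphi(v)\in[0,+\infty)$ and a nontangential limit $L_\varphi(v)\in\mathbb R$ for $\varphi(f(\alpha+zv))$. The Schwarz lemma from Section 3, applied to $z\mapsto f(\alpha+zv)$, should yield operator-monotonicity of $y\mapsto \Im f(\alpha+iyv)/y$ as $y\downarrow 0$, with an upper bound supplied pointwise by the scalar liminfs; a bounded monotone net of positive selfadjoint operators converges in the strong operator topology, hence it has a positive limit $c(v)\in\mathcal A$ with $\varphi(c(v))=c_\varphi(v)$ for every state. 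For the existence of $f(\alpha)$, use \eqref{FDC} to write $f(\alpha+zv)-f(\alpha+z'v)=\Delta f(\alpha+zv,\alpha+z'v)((z-z')v)$, control $\|\Delta f\|$ uniformly via the Schwarz estimate along nontangential approaches, and deduce a Cauchy property; selfadjointness of the limit follows from $\varphi(f(\alpha))=L_\varphi(v)\in\mathbb R$ for every $\varphi$, and independence of $v$ from the same identity applied with $\alpha+zv_1,\alpha+zv_2$ on the two arguments of $\Delta f$. The limits \eqref{7} are obtained by a level-$2$ bootstrap: applying the level-$2$ version of the above to a matrix in $H^+(M_2(\mathcal A))$ with diagonal $(\alpha+zv_1)\oplus(\alpha+zv_2)$ and off-diagonal block $zb$, formula \eqref{FDQ} identifies $\Delta f(\alpha+zv_1,\alpha+zv_2)(b)$ as an off-diagonal entry of $f$ at level $2$, reducing its convergence to the level-$2$ instance of part (1). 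Specializing $v_1=v_2=v$ recovers the $f'$ limit, and combining with $f(\alpha+iyv)-f(\alpha)=\Delta f(\alpha+iyv,\alpha)(iyv)$ (valid in the limit) gives $\lim f'(\alpha+zv)(v)=c(v)$.

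Part (1') follows by polarization. The added scalar-gradient hypothesis yields joint existence of both partial derivatives at $(0,0)$ of $\varphi(f(\alpha+zv+\zeta w))$ for all states $\varphi$ and all $v,w>0$, which together with the pointwise wo limits from (1) forces $\lim\Delta f(\alpha+zv_1,\alpha+zv_2)$ and $\lim f'(\alpha+zv)$ to coincide on all inputs and be independent of $v_1,v_2,v$; complete positivity is automatic from the level-$n$ version of part (1). Part (2) is handled by integration: from $\lim_{y\downarrow 0}f'(\alpha+iyv)=f'(\alpha)$ in wo (independent of $v$ and completely bounded) one obtains $f(\alpha+iyv)-f(\alpha+iy'v)=\int_{y'}^{y}if'(\alpha+itv)(v)\,dt$, yielding a Cauchy argument for the existence and selfadjointness of $f(\alpha)$; complete positivity of $f'(\alpha)$ on $v>0$ follows because, taking imaginary parts in the integral identity and applying Lebesgue differentiation, $f'(\alpha)(v)=\mathrm{so}\text{-}\lim \Im f(\alpha+iyv)/y$, and this limit of positive operators is positive at every level $n$.

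The key technical obstacle is the upgrade from scalar (state-tested) convergence to strong-operator convergence of $\Im f(\alpha+iyv)/y$, which rests on the operator-monotonicity of this expression as $y\downarrow 0$ and ultimately on the Schwarz-Pick-type inequality for noncommutative maps to be established in Section 3. A secondary subtlety is the level-$2$ bootstrap via \eqref{FDQ} required to treat $\Delta f$ and $f'$ simultaneously and to verify that the various limits do not depend on the directions $v,v_1,v_2$.
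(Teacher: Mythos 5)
Your high-level plan (test against states, invoke the classical JWC theorem, then promote scalar statements to operator statements using the Schwarz--Pick estimate of Section 3 and the difference-quotient structure) matches the paper's strategy, and the treatment of parts (1'), (2), and the norm-convergence / independence-of-$v$ of $f(\alpha)$ is essentially in the right spirit. However there are two concrete gaps in part (1).

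First, your route to the strong operator convergence \eqref{6} via ``operator-monotonicity of $y\mapsto \Im f(\alpha+iyv)/y$'' is not justified by anything in Section 3: the Schwarz--Pick inequality of Proposition \ref{prop:3.1} compares $f$ at two points, it does not give a monotone ordering of the operators $\Im f(\alpha+iyv)/y$ in $y$, and you cannot simply cite convergence of bounded monotone nets. What actually carries the argument (and is what the paper does) is weaker and does not need monotonicity: Banach--Steinhaus plus positivity yields $\sup_{y\in(0,1)}\|\Im f(\alpha+iyv)/y\|<\infty$; polarization of the scalar limits produces a wo-limit $c$; and the classical Julia estimate $\langle \Im f(\alpha+iyv)\xi,\xi\rangle/y\leq\langle c\xi,\xi\rangle$ gives $c-\Im f(\alpha+iyv)/y\geq0$, whence $\|(c-\Im f(\alpha+iyv)/y)\xi\|^2\leq\|c-\Im f(\alpha+iyv)/y\|\cdot\langle(c-\Im f(\alpha+iyv)/y)\xi,\xi\rangle\to0$. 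You should make this explicit rather than assert monotonicity.

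Second, and more seriously, the ``level-$2$ bootstrap'' for the limits \eqref{7} does not go through as stated. You want to read $\Delta f(\alpha+zv_1,\alpha+zv_2)(b)$ off the $(1,2)$-corner of $f$ evaluated at $\left(\begin{smallmatrix}\alpha+zv_1 & zb\\ 0 & \alpha+zv_2\end{smallmatrix}\right)$ and then invoke the level-$2$ instance of part (1). But part (1) at level $2$ only applies along directions $v>0$ in $M_2(\mathcal A)$, and the matrix $\left(\begin{smallmatrix}v_1 & b\\ 0 & v_2\end{smallmatrix}\right)$ is upper triangular, hence not selfadjoint, let alone positive. So the hypothesis \eqref{5} at level $2$ says nothing directly about this argument of $f$, and you have no a priori control of $\Im f$ there. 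This is exactly the obstruction that the paper's Lemma \ref{lem:4.2} is designed to overcome: it shows (by a contradiction argument using \eqref{est}) that the gap between $f$ at the upper-triangular argument and $f$ at the Hermitian symmetrization $\left(\begin{smallmatrix}\alpha+iyv_1 & iyb/2\\ iyb^*/2 & \alpha+iyv_2\end{smallmatrix}\right)$ is $O(y)$; only the symmetrized argument is along a genuine positive direction where the level-$2$ version of (1) applies, giving \eqref{2x2}. Even then one only gets boundedness, and the paper needs a further conjugation argument by $V_\epsilon=\left(\begin{smallmatrix}1 & 0\\ 0 & \sqrt\epsilon\end{smallmatrix}\right)$, together with a diameter-$\sqrt\epsilon$ estimate on the cluster set, to extract the actual existence of $\lim_{y\downarrow0}\Delta f(\alpha+iyv_1,\alpha+iyv_2)(b)$. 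None of this machinery is present in your sketch, and without something like it, the claimed reduction of \eqref{7} to ``the level-$2$ instance of part (1)'' is circular.
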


Unfortunately, unlike in the classical case of Theorem \ref{JC}, and similar to
the case of functions of several complex variables \cite{Rudin,Abate}, item (1') 
above cannot be improved upon. Indeed, it was observed in \cite{AMcY} that
for analytic functions of two complex variables on the bidisk with values
in the unit disk, there exist examples that satisfy the commutative
equivalent of \eqref{5} for the bidisk, and yet 
the gradient map does not have a nontangential limit. 
The equivalent of condition \eqref{5} implies the existence
of all {\em directional} derivatives in permissible directions, but 
these directional derivatives do not necessarily ``add up'' to a linear map.
This commutative example has a natural noncommutative extension,
as shown in \cite{PTD}. It is enough for our purposes to treat
a simplified version of this extension. It is shown in \cite{ATY} that any
Loewner map from the $n$-dimensional upper half-plane $(\mathbb C^+)^n$
to $\mathbb C^+$ has a certain operatorial realization: for any such
$h\colon(\mathbb C^+)^n\to\mathbb C^+$ there exist Hilbert spaces
$\mathcal N,\mathcal M$, a selfadjoint densely defined operator $A$ on
$\mathcal M$, a real number $s$ an orthogonal decomposition $P=
\{P_1,\dots,P_n\}$ of $\mathcal N\oplus\mathcal M$ 
(i.e. $P_iP_j=P_jP_i=\delta_{ij}P_j=\delta_{ij}P_j^*$
and $P_1+\cdots+P_n=1_{\mathcal M\oplus\mathcal N}$) and a vector state
$\varphi_v(\cdot)=\langle\cdot v,v\rangle$ on the von Neumann algebra of 
bounded linear operators on $\mathcal N\oplus\mathcal M$ such that
$$
h(z)=s+\varphi_v(M(z)), \quad z=(z_1,\dots,z_n)\in(\mathbb C^+)^n,1\leq j\leq n,
$$
where 
\begin{eqnarray*}
M(z) & = & \left(\begin{array}{cc}
-i & 0 \\
0 & 1-iA
\end{array}\right)\left(\left(\begin{array}{cc}
1 & 0 \\
0 & A
\end{array}\right)-(z_1P_1+\cdots+z_nP_n)
\left(\begin{array}{cc}
0 & 0 \\
0 & 1
\end{array}\right)\right)^{-1}\\
& & \mbox{}\times\left((z_1P_1+\cdots+z_nP_n)
\left(\begin{array}{cc}
1 & 0 \\
0 & A
\end{array}\right)+
\left(\begin{array}{cc}
0 & 0 \\
0 & 1
\end{array}\right)\right)
\left(\begin{array}{cc}
-i & 0 \\
0 & 1-iA
\end{array}\right)^{-1}.
\end{eqnarray*}
The $2\times2$ matrix decomposition is realized with respect to the canonical 
orthogonal decomposition of $\mathcal N\oplus\mathcal M$. We observe
that such maps $M\colon(\mathbb C^+)^n\to B(\mathcal N\oplus\mathcal M)$
have a natural noncommutative extension to $H^+(\mathbb C^n):=
\coprod_{k\ge1}\{a\in M_k(\mathbb C)\colon \Im a>0\}^n$ given by replacing 
$(z_1P_1+\cdots+z_nP_n)$ in the above formula of $M(z)$ by
$$
\sum_{j=1}^n
(P_j\otimes1_k) a_j 
(P_j\otimes 1_k).
$$
(While it is not obvious from its formula that $\Im M$ is positive when evaluated on 
$(\mathbb C^+)^n$, and even less when its amplification is evaluated on
$\{a\in M_k(\mathbb C)\colon \Im a>0\}^n$, a careful reading of the
proofs of \cite[Propositions 3.4 and 3.5]{ATY} allows one to observe that
they adapt without modification to show that $\Im M(a_1,\dots,
a_n)>0$ for $(a_1,\dots,a_n)\in\{a\in M_k(\mathbb C)\colon \Im a>0\}^n$.)
The extension of $h$ becomes
$$
h_k(a)=s\otimes1_k+(\varphi_v\otimes{\rm Id}_k)(M(a)),$$
for all $ a=(a_1,\dots,a_n)\in
\{a\in M_k(\mathbb C)\colon \Im a>0\}^n.$ For $n=2$ {\em any} analytic 
function $h\colon\mathbb C^+\times\mathbb C^+\to\mathbb C^+$ 
admits such an operatorial realization, and hence it has a noncommutative
extension as described above (see \cite{AMcY,ATY,ATY2}).
Considering the counterexample $h$ provided in \cite{AMcY},
the map $H\colon H^+(\mathbb C^2)\to H^+(\mathbb C^2)$ defined
by $H(a)=(h(a),h(a))$ shows that we cannot dispense of item (1') in
Theorem \ref{Main}. However, observe that the noncommutative 
structure of the function $f$ in Theorem \ref{Main} (1) allows for
a slightly stronger conclusion than in classical case of \cite{AMcY,Abate}: the
``directional derivative'' becomes a bounded linear operator
defined on all of $\mathcal A$.

As noted above, a classical analytic function is itself the first level
of a noncommutative function, via the classical analytic functional
calculus applied to matrices over $\mathbb C$. Relations \eqref{5}, 
\eqref{6}, \eqref{7} are the obvious consequences of relations \eqref{3}
and \eqref{4} in this context. Thus the statements of Theorem 
\ref{Main} are anything but surprising. Indeed, if
$f$ has an analytic extension around $\alpha$, then
the proof of Theorem \ref{Main} is absolutely trivial.


\section{A norm estimate}

Several slightly different proofs of Julia-Wolff-Carath\'eodory Theorem 
can be found in the literature. An essential element in one of them is 
the Schwarz-Pick Lemma: an analytic self-map of the upper half-plane
is a contraction with respect to a ``good'' metric on $\mathbb C^+$.
In the next proposition, we obtain a similar result for noncommutative
functions. We think that there is a rather striking resemblance between 
our result below and \cite[Corollary 3.3]{Mellon}, but it is not clear to us 
yet whether the two results can be obtained from each other, or even to
what extent they are related. We intend to pursue this question later.

\begin{prop}\label{prop:3.1}
Let $\mathcal A,\mathcal C$ be two von Neumann algebras and $f\colon H^+(\mathcal A)
\to H^+(\mathcal C)$ be a noncommutative map. For any $n\in\mathbb N$ and $a,c\in 
H^+(M_n(\mathcal A))$, the linear map
$$
M_n(\mathcal A)\ni b\mapsto\left(\Im f(a)\right)^{-\frac12}\Delta f
(a,c)\left((\Im a)^{\frac12}b(\Im c)^\frac12\right)
\left(\Im f(c)\right)^{-\frac12}\in M_n(\mathcal C)
$$
is a complete contraction. In particular,
$$
\left\|\left(\Im f(a)\right)^{-\frac12}\Delta f
(a,c)(b)\left(\Im f(c)\right)^{-\frac12}\right\|_\mathcal C\leq
\left\|\left(\Im a\right)^{-\frac12}b\left(\Im c\right)^{-\frac12}\right\|_\mathcal A,
$$
so that, by Equation \eqref{FDC}, for $b=a-c$,
$$
\left\|\left(\Im f(a)\right)^{-\frac12}(f(a)-f(c))
\left(\Im f(c)\right)^{-\frac12}\right\|_\mathcal C\leq\left\|\left(
\Im a\right)^{-\frac12}(a-c)\left(\Im c\right)^{-\frac12}\right\|_\mathcal A.
$$
\end{prop}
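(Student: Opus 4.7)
The plan is to apply the Taylor--Taylor difference-quotient identity \eqref{FDQ} to the upper-triangular block matrix $X = \begin{pmatrix} a & w \\ 0 & c \end{pmatrix} \in M_{2n}(\mathcal{A})$ and exploit the fact that $f$ is a noncommutative self-map of the upper half-plane. The whole argument rests on translating the membership ``$X \in H^+(M_{2n}(\mathcal{A}))$'' into a norm bound on $w$, and similarly for $f(X)$, via the $2 \times 2$ Schur-complement criterion already proved in the paper.

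Concretely, I first compute $\Im X = \begin{pmatrix} \Im a & -iw/2 \\ iw^*/2 & \Im c \end{pmatrix}$ and apply the block-positivity equivalence to see that $\Im X > 0$ is equivalent to $\|(\Im a)^{-1/2} w (\Im c)^{-1/2}\| < 2$. Next, \eqref{FDQ} gives $f(X) = \begin{pmatrix} f(a) & \Delta f(a,c)(w) \\ 0 & f(c) \end{pmatrix}$; since $f$ sends $H^+(\mathcal{A})$ into $H^+(\mathcal{C})$, one has $\Im f(X) > 0$, and the same criterion applied in the codomain yields $\|(\Im f(a))^{-1/2} \Delta f(a,c)(w) (\Im f(c))^{-1/2}\| < 2$. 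Combining these with the linearity of $\Delta f(a, c)$ in $w$, and rescaling $w$ by $t < 1$ before letting $t \uparrow 1$, upgrades the strict inequalities to the contraction bound $\|(\Im f(a))^{-1/2} \Delta f(a,c)(w) (\Im f(c))^{-1/2}\| \le \|(\Im a)^{-1/2} w (\Im c)^{-1/2}\|$. The substitution $w = (\Im a)^{1/2} b (\Im c)^{1/2}$ then gives the first displayed inequality of the proposition, and \eqref{FDC} with $b = a - c$ gives the second.

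To upgrade the norm bound to a complete contraction, I rerun the same argument at level $nk$ with $a \otimes 1_k$ and $c \otimes 1_k$ in place of $a$ and $c$. Because $f$ is a noncommutative function, $f(a \otimes 1_k) = f(a) \otimes 1_k$, and hence $\Im f(a \otimes 1_k) = \Im f(a) \otimes 1_k$ (and similarly for $c$). The crucial identification is that $\Delta f(a \otimes 1_k, c \otimes 1_k)$ coincides with the $k$-th matrix amplification of $\Delta f(a, c)$, so that the level-$nk$ contraction bound is precisely the statement that the $k$-th amplification of the map $b \mapsto (\Im f(a))^{-1/2} \Delta f(a,c)((\Im a)^{1/2} b (\Im c)^{1/2}) (\Im f(c))^{-1/2}$ is a contraction on $M_k(M_n(\mathcal{A}))$. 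Since $k$ is arbitrary, complete contractivity follows.

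The main obstacle I anticipate is this amplification identity $\Delta f(a \otimes 1_k, c \otimes 1_k)([b_{ij}]) = [\Delta f(a, c)(b_{ij})]$. A direct verification uses axiom (A) applied to $\begin{pmatrix} a \otimes 1_k & B \\ 0 & c \otimes 1_k \end{pmatrix}$ with the rectangular scalar matrices $e_i \otimes I_n \in M_{nk \times n}(\mathbb{C})$ on either side, reading off each block of $\Delta f(a \otimes 1_k, c \otimes 1_k)(B)$ in terms of the $b_{ij}$. The cleaner route is to invoke the direct-sum compatibility of the right difference-differential operator built into the general theory of \cite{ncfound}; once this identification is in hand, everything else reduces to bookkeeping around the Schur-complement equivalence.
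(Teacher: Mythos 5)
Your proof follows essentially the same route as the paper: translate membership in $H^+(M_{2n}(\mathcal A))$ of the upper-triangular block matrix into the Schur-complement bound $\|(\Im a)^{-1/2}w(\Im c)^{-1/2}\|<2$, push through $f$ via \eqref{FDQ}, read off the same bound in the codomain, and then use homogeneity of $\Delta f(a,c)$ together with a scaling/limiting argument to upgrade the strict inequalities to the contraction estimate, followed by the substitution $w=(\Im a)^{1/2}b(\Im c)^{1/2}$. This matches the paper's argument step for step, with your ``rescale by $t<1$ and let $t\uparrow 1$'' playing the role of the paper's $\varepsilon\to\varepsilon_{b_0}$ limit.

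One genuine addition on your side: the paper's proof establishes the norm bound at each level $n$ but leaves the passage to \emph{complete} contractivity implicit. You spell this out via the amplification identity $\Delta f(a\otimes 1_k,c\otimes 1_k)([b_{ij}])=[\Delta f(a,c)(b_{ij})]$, together with $f(a\otimes 1_k)=f(a)\otimes 1_k$, so that the level-$nk$ contraction bound for the pair $(a\otimes 1_k, c\otimes 1_k)$ is literally the $k$-th amplification of the level-$n$ map being a contraction. This identity is indeed standard in the nc function calculus of \cite{ncfound} (it follows from axiom (A) applied with the scalar intertwiners you indicate, or from the direct-sum compatibility of $\Delta f$), so the step is correct and fills in detail the paper takes for granted. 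No gaps.
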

The estimate will often be used under the equivalent forms 
\begin{eqnarray}
\lefteqn{\left[(\Im f(a))^{-\frac12}\Delta f(a,c)(b)(\Im f(c))^{-\frac12}\right]^*
\left[(\Im f(a))^{-\frac12}\Delta f(a,c)(b)(\Im f(c))^{-\frac12}\right]}\nonumber\\ & \leq & 
\left\|\left(\Im a\right)^{-\frac12}b\left(\Im c\right)^{-\frac12}\right\|_\mathcal A^2\cdot
1_{M_n(\mathcal C)},\label{est}
\quad\quad\quad\quad\quad\quad\quad\quad\quad\quad\quad\quad\quad\quad\quad\quad\quad
\end{eqnarray}
\begin{eqnarray}
\lefteqn{\left[(\Im f(a))^{-\frac12}\Delta f(a,c)(b)(\Im f(c))^{-\frac12}\right]
\left[(\Im f(a))^{-\frac12}\Delta f(a,c)(b)(\Im f(c))^{-\frac12}\right]^*}\nonumber\\ & \leq & 
\left\|\left(\Im a\right)^{-\frac12}b\left(\Im c\right)^{-\frac12}\right\|_\mathcal A^2\cdot
1_{M_n(\mathcal C)},\label{est'}
\quad\quad\quad\quad\quad\quad\quad\quad\quad\quad\quad\quad\quad\quad\quad\quad\quad
\end{eqnarray}
\begin{equation}
\Delta f(a,c)(b)^*(\Im f(a))^{-1}
\Delta f(a,c)(b)\leq
\left\|\left(\Im a\right)^{-\frac12}b\left(\Im c\right)^{-\frac12}\right\|_\mathcal A^2\cdot
\Im f(c),\label{est''}
\end{equation}
\begin{equation}
\Delta f(a,c)(b)(\Im f(c))^{-1}
\Delta f(a,c)(b)^*\leq
\left\|\left(\Im a\right)^{-\frac12}b\left(\Im c\right)^{-\frac12}\right\|_\mathcal A^2\cdot
\Im f(a),\label{est'''}
\end{equation}
which we give here for convenience. Of course, if $b\in M_n(\mathcal A)$, the notation 
$\|b\|_\mathcal A$ signifies the C${}^*$-norm of $b$ as an element in $M_n(\mathcal A)$.
\begin{proof}
As 
$\Im \left(\begin{array}{cc}
a & b\\
0 & c
\end{array}\right)=
\left(\begin{array}{cc}
\Im a & \frac{b}{2i}\\
\left(\frac{b}{2i}\right)^* & \Im c
\end{array}\right)$,
we have $\left(\begin{array}{cc}
a & b\\
0 & c
\end{array}\right)\in H^+(M_{2n}(\mathcal A))$ if and only if
$a,c\in H^+(M_{n}(\mathcal A))$ and $b^*(\Im a)^{-1}b<4\Im c$.
This last relation is equivalent to $\left[(\Im a)^{-\frac12}b
(\Im c)^{-\frac12}\right]^*\left[(\Im a)^{-\frac12}b
(\Im c)^{-\frac12}\right]<4$, or $\|(\Im a)^{-\frac12}b
(\Im c)^{-\frac12}\|_\mathcal A<2$. Thus, as $f$ maps the noncommutative
upper half-plane into itself, and for any $b_0\in M_n(\mathcal A)$
there exists an $\varepsilon_{b_0}=\frac{2}{\|(\Im a)^{-\frac12}b_0
(\Im c)^{-\frac12}\|_\mathcal A}>0$ such that
$$
\left(\begin{array}{cc}
a & \varepsilon {b_0}\\
0 & c
\end{array}\right)\in H^+(M_{2n}(\mathcal A))\quad\text{for all }
\varepsilon\in[0,\varepsilon_{b_0}),
$$
and so
$$
\left(\begin{array}{cc}
f(a) & \varepsilon\Delta f(a,c)({b_0})\\
0 & f(c)
\end{array}\right)\in H^+(M_{2n}(\mathcal A))\quad\text{for all }
\varepsilon\in[0,\varepsilon_{b_0}).
$$
In particular
$\varepsilon\left\|\left(\Im f(a)\right)^{-\frac12}
\Delta f(a,c)({b_0})\left(\Im f(c)\right)^{-\frac12}\right\|_\mathcal C<2$
for $\varepsilon<\frac{2}{\|(\Im a)^{-\frac12}b_0
(\Im c)^{-\frac12}\|_\mathcal A}$. Letting 
$\varepsilon\to\frac{2}{\|(\Im a)^{-\frac12}b_0
(\Im c)^{-\frac12}\|_\mathcal A}$ from below, we obtain
$$
\left\|\left(\Im f(a)\right)^{-\frac12}
\Delta f(a,c)({b_0})\left(\Im f(c)\right)^{-\frac12}\right\|_\mathcal C\leq
\|(\Im a)^{-\frac12}b_0(\Im c)^{-\frac12}\|_\mathcal A.
$$
As $b_0\in M_n(\mathcal A)$ has been chosen arbitrarily, we
can replace it by $(\Im a)^{\frac12}b(\Im c)^{\frac12}$
to conclude that, as claimed
$$
\left\|\left(\Im f(a)\right)^{-\frac12}\Delta f
(a,c)\left((\Im a)^{\frac12}b(\Im c)^\frac12\right)
\left(\Im f(c)\right)^{-\frac12}\right\|_\mathcal C\leq\|b\|_\mathcal A,\quad b\in
M_n(\mathcal A).
$$
\end{proof}

Clearly, the same method can be used to obtain estimates involving
$\Delta^jf$ for all $j\in\mathbb N$. 
We give one such estimate pertaining to a special case of $j=2$. 
We shall simply apply the above result to appropriately
chosen elements in $M_2(\mathcal A)$. Let
$$
\left(\begin{array}{cccc}
a_1 & 0 & 0 & 0 \\
0 & a_2 & c & 0 \\
0 & 0 & a_3 & b \\
0 & 0 & 0 & a_4 
\end{array}\right)
$$
be such that $\Im a_j>0$ and $
\left(\begin{array}{cc}
a_3 & b\\
0 & a_4
\end{array}\right)\in H^+(M_2(\mathcal A))$. 
From \cite[Section 3]{ncfound} we obtain
\begin{eqnarray*}
\lefteqn{f\left(\begin{array}{cccc}
a_1 & 0 & 0 & 0 \\
0 & a_2 & c & 0 \\
0 & 0 & a_3 & b \\
0 & 0 & 0 & a_4 
\end{array}\right)}\\
& = &
\left(\begin{array}{cccc}
f(a_1) & 0 & 0 & 0 \\
0 & f(a_2) & \Delta f(a_2,a_3)(c) & \Delta^2 f(a_2,a_3,a_4)(c,b) \\
0 & 0 & f(a_3) & \Delta f(a_3,a_4)(b) \\
0 & 0 & 0 & f(a_4) 
\end{array}\right).\quad\quad
\end{eqnarray*}
Applying Proposition \ref{prop:3.1} to $a=\left(\begin{array}{cc}
a_1 & 0\\
0 & a_2
\end{array}\right),c=\left(\begin{array}{cc}
a_3 & b\\
0 & a_4
\end{array}\right)$ and $b=\left(\begin{array}{cc}
0 & 0\\
c & 0
\end{array}\right)$
under the form of \eqref{est'''} provides an estimate for $\Delta^2 f(a_2,a_3,a_4)(c,b)$.
As the size of the formula in question becomes quite large, we shall split it.
We have
\begin{eqnarray*}
\Im f\left(\begin{array}{cc}
a_3 & b\\
0 & a_4
\end{array}\right)^{-1} 
& = & \left(\begin{array}{cc}
\Im f(a_3) & \frac{\Delta f(a_3,a_4)(b)}{2i}\\
\frac{\Delta f(a_3,a_4)(b)^*}{-2i} & f(a_4)
\end{array}\right)^{-1}\\
& = & \left(\begin{array}{cc}
e_{11} & e_{12}\\
e_{21} & e_{22}
\end{array}\right),
\end{eqnarray*}
where
\begin{eqnarray*}
e_{11}&=&\left(\Im f(a_3)-\frac{\Delta f(a_3,a_4)(b)(\Im f(a_4))^{-1}\Delta f(a_3,a_4)(b)^*}{4}\right)^{-1}\\
e_{12}&=&\left(\Im f(a_3)-\frac{\Delta f(a_3,a_4)(b)(\Im f(a_4))^{-1}\Delta f(a_3,a_4)(b)^*}{4}
\right)^{-1}\\
& & \mbox{}\times\frac{\Delta f(a_3,a_4)(b)}{-2i}(\Im f(a_4))^{-1}\\
e_{21}&=&(\Im f(a_4))^{-1}\frac{\Delta f(a_3,a_4)(b)^*}{2i}\\
& & \mbox{}\times\left(\Im f(a_3)-\frac{\Delta f(a_3,a_4)(b)(\Im f(a_4))^{-1}\Delta f(a_3,a_4)(b)^*}{4}
\right)^{-1}=e_{12}^*\\
e_{22}&=&\left(\Im f(a_4)-\frac{\Delta f(a_3,a_4)(b)^*(\Im f(a_3))^{-1}\Delta f(a_3,a_4)(b)}{4}\right)^{-1}.
\end{eqnarray*}
Thus, in the left-hand side of \eqref{est'''} preserves only one nonzero element, in the
position $22$ (lower right corner), namely
\begin{eqnarray*}
\lefteqn{\Delta f(a_2,a_3)(c)e_{11}\Delta f(a_2,a_3)(c)^*+2\Re\left(\Delta f(a_2,a_3)(c)e_{12}
\Delta^2 f(a_2,a_3,a_4)(c,b)^*\right)}\\
& & \mbox{}+\Delta^2 f(a_2,a_3,a_4)(c,b)e_{22}\Delta^2 f(a_2,a_3,a_4)(c,b)^* \\
&=&\Delta f(a_2,a_3)(c)e_{11}\Delta f(a_2,a_3)(c)^*\\
& & \mbox{}-\Im\left(
\Delta f(a_2,a_3)(c)e_{11}\Delta f(a_3,a_4)(b)(\Im f(a_4))^{-1}
\Delta^2 f(a_2,a_3,a_4)(c,b)^*\right)\\
& & \mbox{}+\Delta^2 f(a_2,a_3,a_4)(c,b)e_{22}\Delta^2 f(a_2,a_3,a_4)(c,b)^*.
\end{eqnarray*}
On the right-hand side of \eqref{est'''} we have the norm
$$
\left\|\left(\begin{array}{cc}
(\Im a_1)^{-\frac12} & 0\\
0 & (\Im a_2)^{-\frac12}
\end{array}\right)\left(\begin{array}{cc}
0 & 0\\
c & 0
\end{array}\right)\left(\begin{array}{cc}
\Im a_3 & \frac{b}{2i}\\
\left(\frac{b}{2i}\right)^* & \Im a_4
\end{array}\right)^{-\frac12}\right\|.
$$
We use the properties of C${}^*$-norms to conclude that this norm in $M_2(\mathcal A)$ in fact
equals the norm $\left\|(\Im a_2)^{-\frac12}c \left(\Im a_3-\frac14b(\Im a_4)^{-1}b^*
\right) c^*(\Im a_2)^{-\frac12}\right\|$ in $\mathcal A$.
Thus, inequality \eqref{est'''} for elements in $M_2(\mathcal A)$ translates into an
inequality of elements in $\mathcal A$ as follows:
\begin{eqnarray}
\lefteqn{\Delta f(a_2,a_3)(c)e_{11}\Delta f(a_2,a_3)(c)^*}\nonumber\\
& & \mbox{}-\Im\left(
\Delta f(a_2,a_3)(c)e_{12}\Delta f(a_3,a_4)(b)(\Im f(a_4))^{-1}
\Delta^2 f(a_2,a_3,a_4)(c,b)^*\right)\nonumber\\
& & \mbox{}+\Delta^2 f(a_2,a_3,a_4)(c,b)e_{22}\Delta^2 f(a_2,a_3,a_4)(c,b)^*\nonumber\\
& \leq & \left\|(\Im a_2)^{-\frac12}c \left(\Im a_3-\frac14b(\Im a_4)^{-1}b^*
\right)^{-1} c^*(\Im a_2)^{-\frac12}\right\|\Im f(a_2).\label{secondderiv}
\end{eqnarray}

However, for now their form 
seems to be too complicated when $j>2$, and of no significant 
use for the purposes of this paper. Since the above proposition is 
applied in this paper only for $\mathcal A=\mathcal C$, from now
on we shall eliminate the subscript from the notation of the C${}^*$-norm of $\mathcal A$.

\begin{prop}\label{Hyper}
Fix $n\in\mathbb N$, $r>0$ and $c\in H^+(M_n(\mathcal A))$. Denote 
$$
B^+_n(c,r)=\left\{a\in H^+(M_n(\mathcal A))\colon\left\|(\Im a)^{-1/2}(a-c)(\Im c)^{-1/2}\right\|\leq r
\right\}.
$$
Then $B_n^+(c,r)$ is a norm-closed norm-bounded convex subset of $H^+(M_n(\mathcal A))$ with
nonempty interior, which is bounded away from the topological boundary in the norm topology of 
$H^+(M_n(\mathcal A))$. Moreover, it is noncommutative. More precisely,
\begin{equation}\label{estimBall}
\|a\|\leq\|\Re c\|+\|\Im c\|\left(\frac{r^2+2+r\sqrt{r^2+4}}{2}+r\sqrt{\frac{r^2+2+r\sqrt{r^2+4}}{2}}\right),\ a\in B^+_n(c,r),
\end{equation}
and 
\begin{equation}\label{estimBdry}
\Im a\ge\frac{1}{2+r^2}\Im c,\quad a\in B^+_n(c,r).
\end{equation}

\end{prop}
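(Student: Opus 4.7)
The plan is to secure the two quantitative estimates \eqref{estimBdry} and \eqref{estimBall} first, and then read off the structural properties from them.

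For \eqref{estimBdry}, the defining bound $\|(\Im a)^{-1/2}(a-c)(\Im c)^{-1/2}\|\le r$ is equivalent, via the C${}^*$-identity applied to this element and to its adjoint, to the two operator inequalities $(a-c)^*(\Im a)^{-1}(a-c)\le r^2\Im c$ and $(a-c)(\Im c)^{-1}(a-c)^*\le r^2\Im a$. I combine the second with the elementary bound $b^*h^{-1}b+h\ge 2\Im b$, valid for any $h>0$ and any $b$, which is immediate from expanding $(h^{-1/2}b-ih^{1/2})^*(h^{-1/2}b-ih^{1/2})\ge 0$. Applying it with $b=(a-c)^*$ and $h=\Im c$ (so $\Im b=\Im c-\Im a$) gives $(a-c)(\Im c)^{-1}(a-c)^*\ge\Im c-2\Im a$; combined with the upper bound $r^2\Im a$, this is $(2+r^2)\Im a\ge\Im c$, which is \eqref{estimBdry}.

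For \eqref{estimBall}, I pass from operator inequalities to scalar norms. From $(a-c)^*(\Im a)^{-1}(a-c)\le r^2\Im c$, the C${}^*$-identity yields $\|(\Im a)^{-1/2}(a-c)\|\le r\|\Im c\|^{1/2}$, and submultiplicativity against $\|(\Im a)^{1/2}\|=\|\Im a\|^{1/2}$ gives $\|a-c\|\le r\|\Im a\|^{1/2}\|\Im c\|^{1/2}$. Since $\Im$ is a contraction, $\|\Im a\|-\|\Im c\|\le\|\Im(a-c)\|\le r\|\Im a\|^{1/2}\|\Im c\|^{1/2}$. Writing $s=\|\Im a\|^{1/2}$, $t=\|\Im c\|^{1/2}$ this is the quadratic $s^2-rts-t^2\le 0$ with largest root $s=t(r+\sqrt{r^2+4})/2$; squaring gives $\|\Im a\|\le K\|\Im c\|$ with $K=(r^2+2+r\sqrt{r^2+4})/2$. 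Then $\|\Re a-\Re c\|\le\|a-c\|\le r\sqrt K\,\|\Im c\|$, and the triangle inequality $\|a\|\le\|\Re a\|+\|\Im a\|\le\|\Re c\|+(K+r\sqrt K)\|\Im c\|$ is exactly \eqref{estimBall}.

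The structural claims now drop out quickly. Convexity becomes transparent after rewriting the defining condition, via the $2\times 2$ block positivity criterion recorded in the preliminary material, as
$$
a\in B^+_n(c,r)\iff\begin{pmatrix} r^2\Im c & (a-c)^*\\ a-c & \Im a\end{pmatrix}\ge 0,
$$
which is affine in $a$, so convex combinations of solutions are solutions. Norm-closedness and boundedness away from $\partial H^+(M_n(\mathcal A))$ both come from \eqref{estimBdry}: any norm-limit $a_\infty$ of a sequence in $B^+_n(c,r)$ still satisfies $\Im a_\infty\ge(2+r^2)^{-1}\Im c>0$, so remains in $H^+$; continuous functional calculus (the spectra of the $\Im a_k$ stay in a fixed compact subset of $(0,+\infty)$) propagates the inequality to the limit. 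Norm-boundedness is \eqref{estimBall}, and nonempty interior is immediate since $c\in B^+_n(c,r)$ with defining norm $0$ and that norm is continuous at $c$. For noncommutativity I interpret the claim as stating that the amplified family $(B^+_{nk}(c\otimes 1_k,r))_{k\ge 1}$ is a noncommutative set; this follows from $c\otimes 1_{k+k'}=(c\otimes 1_k)\oplus(c\otimes 1_{k'})$ and the block-diagonal calculation that the defining norm of $a\oplus a'$ equals the maximum of the two individual defining norms.

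The one delicate point is extracting the sharp constant $K$ rather than the cruder $\|\Im a\|\le(2+r^2)\|\Im c\|$ that one gets by applying the symmetric version of the $b^*h^{-1}b+h\ge 2\Im b$ argument; the trick is to pass to scalar norms \emph{before} trying to bound $\|\Im a\|$, so that the contractivity of $\Im$ can be exploited against the norm of $a-c$. Everything else is bookkeeping atop the two operator inequalities and the positivity criteria already recorded in the excerpt.
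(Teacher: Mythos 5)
Your proof is correct, and it diverges from the paper's in two places, both to advantage. For the boundary estimate \eqref{estimBdry}, the paper passes to vector states, expands $\langle(a-c)(\Im c)^{-1}(a-c)^*\xi,\xi\rangle$ into real and imaginary parts, and invokes the Cauchy--Schwarz inequality in the Hilbert space to discard a cross term; you instead use the purely operator-algebraic inequality $b^*h^{-1}b+h\ge 2\Im b$ (a completed square), apply it with $b=(a-c)^*$, $h=\Im c$, and read off \eqref{estimBdry} directly from $(a-c)(\Im c)^{-1}(a-c)^*\le r^2\Im a$. This is shorter and avoids vectors and states entirely. For convexity, the paper verifies midpoint convexity by a direct algebraic computation and then appeals to closedness; you observe that membership in $B^+_n(c,r)$ is equivalent to positivity of the $2\times 2$ block $\left(\begin{smallmatrix} r^2\Im c & (a-c)^*\\ a-c & \Im a\end{smallmatrix}\right)$, which is affine in $a$, so convexity of the positive cone gives the result at once. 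The proof of \eqref{estimBall} is essentially identical to the paper's (same reduction to $\|a-c\|^2\le r^2\|\Im a\|\,\|\Im c\|$, same quadratic inequality, same split into real and imaginary parts), as are the arguments for norm-closedness (via functional calculus continuity of $x\mapsto x^{-1/2}$ on a spectral set uniformly bounded away from $0$ thanks to \eqref{estimBdry}) and for the noncommutative-set structure (direct sums respect all the operations involved, and $\|a\oplus a'\|=\max\{\|a\|,\|a'\|\}$). One small caveat: the closedness argument as written is compressed — you should make explicit that what ``propagates to the limit'' is not just $\Im a_\infty\ge(2+r^2)^{-1}\Im c$ (automatic under norm convergence) but the defining norm bound $\le r$, which requires the norm-continuity of $a\mapsto(\Im a)^{-1/2}(a-c)(\Im c)^{-1/2}$ on the set under consideration; this is exactly where the uniform spectral bound from \eqref{estimBdry} is used.
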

\begin{proof}
The set $B^+_n(c,r)$ is norm-bounded: $a\in B^+_n(c,r)$ if and only if $(\Im a)^{-\frac12}(a-c)
(\Im c)^{-1}(a-c)^*(\Im a)^{-\frac12}\leq r^2\cdot1$, relation which implies $(a-c)(\Im c)^{-1}
(a-c)^*\leq r^2\|\Im a\|\cdot1$, which in its own turn implies $\left\|[(a-c)(\Im c)^{-\frac12}]
[(a-c)(\Im c)^{-\frac12}]^*\right\|\leq r^2\|\Im a\|$. Recalling that in any C${}^*$-algebra the
adjoint (star) operation is isometric and that $\|x^*x\|=\|x\|^2$, this implies that
$\left\|[(a-c)(\Im c)^{-\frac12}]^*
[(a-c)(\Im c)^{-\frac12}]\right\|\leq r^2\|\Im a\|$, which again implies 
$(\Im c)^{-\frac12}(a-c)^*(a-c)(\Im c)^{-\frac12}\leq r^2\|\Im a\|\cdot1$. Thus, repeating once again 
the above computations, we obtain
$$
\|a-c\|^2\leq r^2\|\Im a\|\|\Im c\|,\quad a\in B^+_n(c,r).
$$
Recall that $\Im x=(x-x^*)/2i$, so $\|\Im x\|\leq(\|x\|+\|x^*\|)/2=\|x\|$. Similarly,  $\|\Re x\|\leq
\|x\|.$ Applying this to $x=a-c$, we obtain
$$
\left(\|\Im a\|-\|\Im c\|\right)^2\leq\|\Im (a-c)\|^2\leq\|a-c\|^2\leq r^2\|\Im a\|\|\Im c\|
,\quad a\in B^+_n(c,r).
$$
Direct computation shows that this relation imposes 
\begin{equation}\label{estimIm}
\frac{\|\Im c\|}{2}\left(r^2+2-r\sqrt{r^2+4}\right)\leq\|\Im a\|\leq
\frac{\|\Im c\|}{2}\left(r^2+2+r\sqrt{r^2+4}\right), 
\end{equation}
for all $a\in B^+_n(c,r).$ Similarly,$\|\Re(a-c)\|^2\leq\|a-c\|^2\leq  r^2\|\Im a\|\|\Im c\|$ implies
\begin{equation}\label{estimRe}
0\leq\|\Re a\|\le\|\Re c\|+r\|\Im c\|\sqrt{\frac{r^2+2+r\sqrt{r^2+4}}{2}},\quad a\in B^+_n(c,r).
\end{equation}
Adding relations \eqref{estimIm} and \eqref{estimRe} provides the bound
$$
\|a\|\leq\|\Re c\|+\|\Im c\|
\left(\frac{r^2+2+r\sqrt{r^2+4}}{2}+r\sqrt{\frac{r^2+2+r\sqrt{r^2+4}}{2}}\right),
$$
as claimed in our remark.

Relation \eqref{estimBdry} is proved by a direct application of one of the equivalent definitions
of positivity in a von Neumann algebra and the Cauchy-Buniakovsky-Schwarz inequality in Hilbert
spaces. Let $\xi$ be an arbitrary vector in the Hilbert space $\mathcal H^n$ on which $M_n(
\mathcal A)$ acts as a von Neumann algebra. As we have seen in the proof of \eqref{estimBall}
above, $a\in B^+_n(c,r)\iff (a-c)(\Im c)^{-1}(a-c)^*\leq r^2\Im a$. This means that
$$
\left\langle (a-c)(\Im c)^{-1}(a-c)^*\xi,\xi\right\rangle\leq r^2\langle\Im a\xi,\xi\rangle.
$$
Moving $a-c$ to the right with a star and taking real and imaginary parts provides us with
\begin{eqnarray*}
\lefteqn{\left\|(\Im c)^{-\frac12}\Re(a-c)\xi\right\|_2^2+\left\|(\Im c)^{-\frac12}\Im a\xi\right\|_2^2
+\langle\Im c\xi,\xi\rangle}\\
& & \mbox{}+i\left(\left\langle(\Im c)^{-\frac12}\Re(a-c)\xi,(\Im c)^{-\frac12}\Im a\xi\right\rangle
-\overline{\left\langle(\Im c)^{-\frac12}\Re(a-c)\xi,(\Im c)^{-\frac12}\Im a\xi\right\rangle}\right)\\
&\leq &(2+r^2)\langle\Im a\xi,\xi\rangle.
\end{eqnarray*}
Second line above is simply $-2\Im\left\langle(\Im c)^{-\frac12}\Re(a-c)\xi,(\Im c)^{-\frac12}\Im 
a\xi\right\rangle$, which is clearly greater than $-2\left|
\left\langle(\Im c)^{-\frac12}\Re(a-c)\xi,(\Im c)^{-\frac12}\Im a\xi\right\rangle\right|.$ By the 
Schwarz-Cauchy inequality (applied in the second inequality below) we obtain
\begin{eqnarray*}
\langle\Im c\xi,\xi\rangle & \leq & \langle\Im c\xi,\xi\rangle+\left(
\left\|(\Im c)^{-\frac12}\Re(a-c)\xi\right\|_2-\left\|(\Im c)^{-\frac12}\Im a\xi\right\|_2\right)^2\\
& = & \langle\Im c\xi,\xi\rangle+\left\|(\Im c)^{-\frac12}\Re(a-c)\xi\right\|_2^2+\left\|(\Im 
c)^{-\frac12}\Im a\xi\right\|_2^2\\
& & \mbox{}-2\left\|(\Im c)^{-\frac12}\Re(a-c)\xi\right\|_2\left\|(\Im c)^{-\frac12}\Im a\xi\right\|_2\\
& \leq & \langle\Im c\xi,\xi\rangle+\left\|(\Im c)^{-\frac12}\Re(a-c)\xi\right\|_2^2+\left\|(\Im 
c)^{-\frac12}\Im a\xi\right\|_2^2\\
& & \mbox{}-2\left|
\left\langle(\Im c)^{-\frac12}\Re(a-c)\xi,(\Im c)^{-\frac12}\Im a\xi\right\rangle\right|\\
& \leq & \langle\Im c\xi,\xi\rangle+\left\|(\Im c)^{-\frac12}\Re(a-c)\xi\right\|_2^2+\left\|(\Im 
c)^{-\frac12}\Im a\xi\right\|_2^2\\
& & \mbox{}-2\Im\left\langle(\Im c)^{-\frac12}\Re(a-c)\xi,(\Im c)^{-\frac12}\Im 
a\xi\right\rangle\\
&\leq &(2+r^2)\langle\Im a\xi,\xi\rangle.
\end{eqnarray*}
Since this is true for all vectors $\xi\in\mathcal H^n$, we obtain $\Im c\leq(2+r^2)\Im a$, implying
\eqref{estimBdry}. 

That $B^+_n(c,r)$ is closed in norm follows even easier: if $a_m\in B^+_n(c,r)$ and $\lim_{m\to\infty}
\|a_m-a\|=0$, then $\lim_{m\to\infty}\|a_m^*-a^*\|=0$, and thus $\lim_{m\to\infty}
\|\Im a_m-\Im a\|=0$. This also implies that $\Im a\ge\frac{1}{2+r^2}\Im c>0$, so that, by 
analytic functional calculus, $\lim_{m\to\infty}\left\|(\Im a_m)^{-\frac12}-(\Im a)^{-\frac12}\right\|=0$.
A few succesive applications of the product-norm inequalities in C${}^*$-algebras provides 
\begin{eqnarray*}
\left\|(\Im a)^{-\frac12}(a-c)(\Im c)^{-\frac12}\right\|&\leq&\left\|(\Im a)^{-\frac12}\right\|\|a_m-a\|
+\left\|(\Im a_m)^{-\frac12}-(\Im a)^{-\frac12}\right\|\\
& & \mbox{}\times\left\|(a_m-c)(\Im c)^{-\frac12}\right\|+\left\|(\Im a_m)^{-\frac12}(a_m-c)
(\Im c)^{-\frac12}\right\|.
\end{eqnarray*}
First and second right-hand terms converge to zero as $m\to\infty$, and the last is no more than $r$. 
Thus, $\left\|(\Im a)^{-\frac12}(a-c)(\Im c)^{-\frac12}\right\|\leq r$, which implies $a\in B^+_n(c,r)$.

Midpoint convexity of $B^+_n(c,r)$ follows easily from a direct computation: let
$a_1,a_2\in B_n^+(c,r)$. We show that $(a_1+a_2)/2$ is in $B_n^+(c,r)$.
As in \eqref{est}, this is equivalent to showing that 
$$
\left(\Im\frac{a_1+a_2}{2}\right)^{-\frac12}\left(\frac{a_1+a_2}{2}-c\right)(\Im c)^{-1}
\left(\frac{a_1+a_2}{2}-c\right)^*\left(\Im\frac{a_1+a_2}{2}\right)^{-\frac12}\leq r^2\cdot1,
$$
which is in its own turn equivalent to 
\begin{equation}\label{conv}
\left(\frac{a_1-c}{2}+\frac{a_2-c}{2}\right)(\Im c)^{-1}\left(\frac{a_1-c}{2}+\frac{a_2-c}{2}\right)^*\leq
\frac{r^2}{2}\Im(a_1+a_2).
\end{equation}
However, adding the inequalities $(a_1-c)(\Im c)^{-1}(a_1-c)^*\leq r^2\Im a_1$ and
$(a_2-c)(\Im c)^{-1}(a_2-c)^*\leq r^2\Im a_2$ (assumed to be true by hypothesis) and dividing by 2, 
we obtain
$$
\frac12((a_1-c)(\Im c)^{-1}(a_1-c)^*+\frac12(a_2-c)(\Im c)^{-1}(a_2-c)^*\leq\frac{r^2}{2}\Im(a_1+a_2).
$$
Thus, our statement is proved if we show that the left-hand term of \eqref{conv} is less than or
equal to the left-hand term of the inequality above. Expanding the left-hand of \eqref{conv} and
subtracting from the one above yields
\begin{eqnarray*}
 &  & \frac12(a_1-c)(\Im c)^{-1}(a_1-c)^*+\frac12(a_2-c)(\Im c)^{-1}(a_2-c)^*\\
& & \mbox{}-\frac14(a_1-c)(\Im c)^{-1}(a_1-c)^*-\frac14(a_2-c)(\Im c)^{-1}(a_2-c)^*\\
& & \mbox{}-\frac14(a_1-c)(\Im c)^{-1}(a_2-c)^*-\frac14(a_2-c)(\Im c)^{-1}(a_1-c)^*\\
& = & \frac14\left[(a_1-c)(\Im c)^{-1}(a_1-c-a_2+c)^*+(a_2-c)(\Im c)^{-1}(a_2-c-a_1+c)^*\right]\\
& = & \frac14(a_1-c-a_2+c)(\Im c)^{-1}(a_1-a_2)^*=\frac14(a_1-a_2)(\Im c)^{-1}(a_1-a_2)^*\ge0.
\end{eqnarray*}
Since $B^+_n(c,r)$ is midpoint convex and closed, it is convex.

To conclude, observe that all the above computations hold if $c\in H^+(M_n(\mathcal A))$ is replaced
by $c\otimes 1_p\in H^+(M_{np}(\mathcal A))$. Indeed, one only needs to observe that taking
imaginary part, inverse and root, as well as multiplication, respect direct sums. Since $\|a\oplus b\|=
\max\{\|a\|,\|b\|\}$, we're done. Estimates \eqref{estimBall} and \eqref{estimBdry}
hold on the amplifications of $c$ to any  $c\otimes 1_p$, $p\in\mathbb N$, with the same constants.
\end{proof}

The following lemma will be useful when applying
Proposition \ref{prop:3.1} to the proof of the main result (compare with the method used in
\cite[Remark 2.5]{BPV1}).

\begin{lemma}\label{lem:3.2}
Assume that $f$ is a noncommutative self-map of the noncommutative
upper half-plane of $\mathcal A$. Let $v_1,v_2>0$ in $\mathcal A$. If 
$$
{\rm wo}\text{-}\lim_{y\downarrow0}\frac{\Im f(\alpha+iyv_j)}{y}=c_j\in
\mathcal A,\quad j\in\{1,2\},
$$
exist, then the set of limit points of $\Delta f(\alpha+zv_1,
\alpha+\zeta v_2)(w)$ as $z,\zeta\to 0$ nontangentially is bounded uniformly in norm as $w$ 
varies in the unit ball of $\mathcal A$.
\end{lemma}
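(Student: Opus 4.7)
The overall strategy is to combine the Schwarz--Pick estimate \eqref{est'''} with the geometric control from Proposition \ref{Hyper}, thereby reducing the nontangential question for $\Delta f$ to the {\rm wo}-limit hypothesis on the imaginary axis, which in turn gives a norm bound via the uniform boundedness principle. Set $a=\alpha+zv_1$ and $c=\alpha+\zeta v_2$ for $z,\zeta\in\mathbb C^+$; since $v_1,v_2>0$, one has $\Im a=(\Im z)v_1$ and $\Im c=(\Im\zeta)v_2$, both positive and invertible. Specializing \eqref{est'''} with $b=w$ and bounding $\|(\Im a)^{-1/2}w(\Im c)^{-1/2}\|^{2}\leq\|v_1^{-1}\|\|v_2^{-1}\|\|w\|^{2}/((\Im z)(\Im\zeta))$ gives
$$
\|\Delta f(a,c)(w)\|^{2}\leq\frac{\|v_1^{-1}\|\,\|v_2^{-1}\|\,\|w\|^{2}}{(\Im z)(\Im\zeta)}\,\|\Im f(a)\|\,\|\Im f(c)\|.
$$
Hence it suffices to show that $\|\Im f(\alpha+zv_j)\|/\Im z$ is bounded as $z\to 0$ inside any fixed nontangential cone $\Gamma_M=\{z\in\mathbb C^+:|\Re z|\leq M\Im z\}$, for $j=1,2$.

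To achieve this, fix $z\in\Gamma_M$, put $y=\Im z$, and compare $a=\alpha+zv_1$ with $c'=\alpha+iyv_1$: since $\Im a=\Im c'=yv_1$ and $a-c'=(\Re z)v_1$, a direct computation gives $\|(\Im a)^{-1/2}(a-c')(\Im c')^{-1/2}\|=|\Re z|/y\leq M$. Proposition \ref{prop:3.1} then places $f(a)$ in the ball $B_1^+(f(c'),M)$, and the upper estimate \eqref{estimIm} produces a constant $C_M$ (depending only on $M$) with $\|\Im f(a)\|\leq C_M\|\Im f(c')\|$; the same comparison handles the $\zeta$-variable. It remains to bound $\|\Im f(\alpha+iyv_j)\|/y$ as $y\downarrow 0$. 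For each $\xi,\eta\in\mathcal H$ the scalar $y\mapsto\langle\Im f(\alpha+iyv_j)\xi,\eta\rangle/y$ is continuous on $(0,\infty)$ by analyticity of $f$ and admits a finite limit at $0^+$ by hypothesis, so it extends continuously to $[0,y_0]$ and is therefore bounded on $(0,y_0]$ for any fixed $y_0>0$. Two successive applications of the uniform boundedness principle (first on $\eta$ with $\xi$ fixed, then on $\xi$) upgrade this to
$$
K_j:=\sup_{0<y\leq y_0}\frac{\|\Im f(\alpha+iyv_j)\|}{y}<\infty.
$$

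Chaining the bounds yields $\|\Im f(\alpha+zv_j)\|/\Im z\leq C_M K_j$ for $z\in\Gamma_M$ with $\Im z\leq y_0$, and hence
$$
\|\Delta f(\alpha+zv_1,\alpha+\zeta v_2)(w)\|^{2}\leq C_M^{2}K_1K_2\,\|v_1^{-1}\|\,\|v_2^{-1}\|\,\|w\|^{2}
$$
for all $z,\zeta\in\Gamma_M$ sufficiently close to $0$ and all $w$ in the unit ball of $\mathcal A$; this is stronger than the claim on limit points. The main technical delicacy is the last step of the preceding paragraph: promoting a pointwise {\rm wo}-limit on the imaginary axis to a genuine operator-norm bound on $(0,y_0]$ rests on analyticity of $f$ (to secure continuity of the matrix coefficients) combined with the two-stage Banach--Steinhaus argument, rather than on anything specific to the noncommutative structure.
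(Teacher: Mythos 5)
Your proof is correct and follows the same overall architecture as the paper's argument (Schwarz--Pick estimate from Proposition \ref{prop:3.1} to reduce matters to a norm bound on $\Im f(\alpha+zv_j)/\Im z$ in a cone, then uniform boundedness to get that bound from the {\rm wo}-limit hypothesis), but it differs in one genuinely interesting way: to pass from the imaginary axis to a nontangential cone $\Gamma_M$, the paper invokes the scalar estimate buried in the proof of Theorem \ref{JC} (that $\varphi(\Im f(\alpha+zv))/\Im z\le (M^2+1)\varphi(c(v))$ on $D_M$, applied to vector states $\varphi_\xi$), whereas you use Proposition \ref{Hyper} directly: you observe that $\alpha+zv_j$ and $\alpha+i(\Im z)v_j$ are at hyperbolic distance at most $M$, so $f(\alpha+zv_j)\in B_1^+(f(\alpha+i(\Im z)v_j),M)$, and then the explicit bound \eqref{estimIm} gives the constant $C_M$. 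This is in the spirit of the paper's abstract, which emphasizes the geometric nature of the argument, and it is arguably cleaner because it stays entirely at the operator level instead of detouring through the classical one-dimensional proof. Two smaller remarks: (i) the paper regularizes with $f_\varepsilon=f+\varepsilon\,\mathrm{Id}$ before ``multiplying out'' the inverse imaginary parts in \eqref{est'''}; you omit this, which is fine here since $\Im f>0$ is already invertible and the inequality $A^{-1}\ge\|A\|^{-1}1$ for $A>0$ suffices, but it is worth being aware that the paper's $\varepsilon$-trick is a robustness device for exactly this kind of step; (ii) the paper gets the norm bound on $\Im f(\alpha+iyv_j)/y$ with a single Banach--Steinhaus application by observing that the quadratic forms $\langle\cdot\,\xi,\xi\rangle$ control the square roots of positive operators, while your two-stage version (first in $\eta$, then in $\xi$) is equivalent but does not exploit positivity.
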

\begin{proof}
By Proposition \ref{prop:3.1}, 
\begin{eqnarray*}
\lefteqn{\left\|\left(\Im f(\alpha+zv_1)\right)^{-\frac12}
\Delta f(\alpha+zv_1,\alpha+\zeta v_2)(w)
\left(\Im f(\alpha+\zeta v_2)\right)^{-\frac12}\right\|}\\
&\leq&\left\|(\Im zv_1)^{-\frac12}w(\Im\zeta v_2)^{-\frac12}\right\|.
\quad\quad\quad\quad\quad\quad\quad\quad\quad\quad\quad\quad\quad\quad
\end{eqnarray*}
Multiplying by $(\Im z\Im\zeta)^{1/2}$ we obtain
$$
\left\|\left[\frac{\Im f(\alpha+zv_1)}{\Im z}\right]^{-\frac12}
\Delta f(\alpha+zv_1,\alpha+\zeta v_2)(w)\left[\frac{\Im f(\alpha+\zeta v_2)}{\Im\zeta}\right]^{-\frac12}
\right\|\leq\left\|v_1^{-\frac12}wv_2^{-\frac12}\right\|.
$$
Let $\varepsilon\ge0$ be fixed, and denote $f_\varepsilon(a)=
f(a)+\varepsilon a$, i.e. $f_\varepsilon=f+\varepsilon{\rm Id}$.
Since $\text{Id}$ is completely positive, $f_\varepsilon$ is still
a noncommutative self-map of the noncommutative upper half-plane
of $\mathcal A$, so that
\begin{eqnarray*}
\lefteqn{\left
\|\left(\frac{\Im f(\alpha+zv_1)}{\Im z}+\varepsilon v_1\right)^{-\frac12}
(\Delta f(\alpha+zv_1,\alpha+\zeta v_2)(w)+\varepsilon w)\right.\times}\\
& & \left.\left(\frac{\Im f(\alpha+\zeta v_2)}{\Im\zeta}+\varepsilon v_2
\right)^{-\frac12}\right\|\\
& \leq & \left\|v_1^{-\frac12}wv_2^{-\frac12}\right\|.\quad\quad\quad
\quad\quad\quad\quad\quad\quad\quad\quad\quad\quad\quad\quad\quad\quad
\quad\quad\quad\quad\quad\quad\quad\quad
\end{eqnarray*}
For simplicity, we denote $A_1(\Im z,\varepsilon)=
\frac{\Im f(\alpha+zv_1)}{\Im z}+\varepsilon v_1$, $A_2(\Im\zeta,\varepsilon)=
\frac{\Im f(\alpha+\zeta v_2)}{\Im\zeta}+\varepsilon v_2$, $W(z,\zeta,\varepsilon)=
\Delta f(\alpha+zv_1,\alpha+\zeta v_2)(w)+\varepsilon w$, and $K=\left\|
v_1^{-\frac12}wv_2^{-\frac12}\right\|^2$. As noted in \eqref{est}, and following the same
procedure as in the proof fo the previous proposition, the above is equivalent to
$$
A_2(\Im\zeta,\varepsilon)^{-\frac12}W(z,\zeta,\varepsilon)^*
A_1(\Im z,\varepsilon)^{-1}W(z,\zeta,\varepsilon)A_2(\Im\zeta,\varepsilon)^{-\frac12}
\leq K1.
$$
As $A_j(\cdot,\varepsilon)\ge\varepsilon1$, we obtain by the same methods as in the proof of
Proposition \ref{Hyper} that
$$
\|W(z,\zeta,\varepsilon)\|^2\leq K\|A_1(\Im z,\varepsilon)\|\|A_2(\Im\zeta,\varepsilon)\|.
$$
Let $\mathcal H$ be the Hilbert space on which $\mathcal A$ acts
as a von Neumann algebra. By our hypothesis, 
$\lim_{y\downarrow 0}\frac{\langle\Im f(\alpha+iyv_j)\xi,\xi\rangle}{y}=
\lim_{y\downarrow 0}\left\|\left(\frac{\Im f(\alpha+iyv_j)}{y}
\right)^\frac12\xi\right\|_2^2$
exist and equal $\langle c_j\xi,\xi\rangle$, finite for any 
$\xi\in\mathcal H$. Thus, the family 
$\left\{\left\|\frac{\left(\Im f(\alpha+iyv_j)\right)^{1/2}}{\sqrt{y}}
\xi\right\|_2\colon y\in(0,1)\right\}$ is bounded for any 
$\xi\in\mathcal H$. By the Banach-Steinhaus Theorem and the 
positivity of the operators $\frac{\Im f(\alpha+iyv_j)}{{y}}$, it follows 
that $\left\{\left\|\frac{\Im f(\alpha+iyv_j)}{{y}}\right\|\colon 
y\in(0,1)\right\}$ is a bounded set. Moreover, as it will be seen in
the proof of Theorem \ref{JC}, if $z$ tends to zero nontangentially and 
$\lim_{y\downarrow 0}\frac{\langle\Im f(\alpha+iyv_j)\xi,\xi\rangle}{y}$ is finite,
then $\left\{\frac{\langle\Im f(\alpha+\Im zv_j)\xi,\xi\rangle}{\Im z}\colon|z|<1,z\in\Gamma\right\}$
stays bounded for any closed cone $\Gamma\subset\mathbb C^+\cup\{0\}$. 
A bound for $c_j$ is $\|c_j\|\leq\limsup_{y\to0}\left\|\frac{\Im f
(\alpha+iyv_j)}{{y}}\right\|$. Thus, $\{\|W(z,\zeta,\varepsilon)\|\colon
z,\zeta\in\Gamma,|z|,|\zeta|<1\}$ is bounded for any closed cone 
$\Gamma\subset\mathbb C^+$ with vertex at zero. The lemma follows 
by letting $\varepsilon\downarrow0$.
\end{proof}

We note that the bounds depend exclusively on $c_j,v_j (j=1,2),w$. Moreover, the dependence can
be bounded (at most) linearly in terms of $\|w\|,\|v_1\|,\|v_2\|,\|v_1^{-1}\|$ and $\|v_2^{-1}\|$.

For the sake of completeness, let us use the results of Proposition \ref{prop:3.1} to give a short, 
elementary proof of Theorem \ref{JC}.

\begin{proof}[Proof of Theorem \ref{JC}]
Assume equation \eqref{3} holds. By Proposition \ref{prop:3.1},
$$
\left|\frac{f(z)-f(z')}{\sqrt{\Im f(z)\Im f(z')}}\right|\leq
\left|\frac{z-z'}{\sqrt{\Im z\Im z'}}\right|,\quad z,z'\in
\mathbb C^+.
$$
This is equivalent to
\begin{equation}\label{9}
\left|\frac{f(z)-f(z')}{z-z'}\right|^2\leq
\left|\frac{\Im f(z)\Im f(z')}{\Im z\Im z'}\right|,\quad z,z'\in
\mathbb C^+, z\neq z'.
\end{equation}
Consider a sequence $\{z_n'\}_{n\in\mathbb N}\subset\mathbb C^+$
converging to $\alpha$ such that $\lim_{n\to\infty}\frac{\Im f(z_n')}{
\Im z_n'}=c$. Clearly $\Im f(z_n')\to0$ as $n\to\infty$, and $\{\Re
f(z_n')\}_{n\in\mathbb N}$ is a bounded sequence in $\mathbb R$.
Moreover, if $\{z_n\}_{n\in\mathbb N}$ and $\{z_n'\}_{n\in\mathbb N}$
are two arbitrary sequences converging to $\alpha$ along which 
$\Im f(z)/\Im z$ stays bounded, then $\{\Re(f(z_n)-f(z_n'))\}_{
n\in\mathbb N}$ converges to zero. This implies that $\lim_{n\to\infty}
f(z_n)$ exists for any sequence $\{z_n\}_{n\in\mathbb N}$ such that
$\{\Im f(z_n)/\Im z_n\}_{n\in\mathbb N}$ is bounded and $\lim_{n\to
\infty}z_n=\alpha$. We agree to call this limit $f(\alpha)$. Taking
limit along $z_n'$ in \eqref{9} we obtain
$$
\left|\frac{f(z)-f(\alpha)}{z-\alpha}\right|^2\leq
c\frac{\Im f(z)}{\Im z},\quad z\in\mathbb C^+.
$$
Fix an $M\in[0,+\infty)$. Let $D_M=\{z\in\mathbb C^+\colon
|\Re z-\alpha|\leq M\Im z\}$. For any $z\in D_M$, this implies
\begin{eqnarray*}
\left(\Re f(z)-f(\alpha)\right)^2 & \leq & c\frac{\Im f(z)}{\Im z}|z-
\alpha|^2-\left(\Im f(z)\right)^2\\
& = & \Im f(z)\left(\frac{c|z-\alpha|^2}{\Im z}-\Im f(z)\right)\\
& = & \Im f(z)\left(c\Im z\frac{|\Re z-\alpha|^2}{(\Im z)^2}
+c\Im z-\Im f(z)\right)\\
& \leq & \Im f(z)\left(c(M^2+1)\Im z-\Im f(z)\right).
\end{eqnarray*}
We conclude that $\Im f(z)/\Im z\leq c(M^2+1)$ for all $z\in D_M$
and thus
$\displaystyle\lim_{\stackrel{z\longrightarrow0}{{\sphericalangle}}}
f(z)=f(\alpha)$. Moreover, for $M=0$ (i.e. $z$ of the form $\alpha+iy$)
we have $c\geq\Im f(\alpha+iy)/y$, which together with the 
definition of $c$ implies $\lim_{y\downarrow0}\frac{\Im f
(\alpha+iy)}{y}=c$, so that 
$$
\frac{\left(\Re f(\alpha+iy)-f(\alpha)\right)^2}{y^2}\leq \frac{\Im f(\alpha+iy)}{y}
\left(c-\frac{\Im f(\alpha+iy)}{y}\right)\to0\quad\text{as }y\downarrow0.
$$
These two facts imply, via direct computation, that
$\lim_{y\downarrow0}\frac{f(\alpha+iy)-f(\alpha)}{iy}=c$. Since
$$
\left|\frac{f(z)-f(\alpha)}{z-\alpha}\right|^2\leq
c\frac{\Im f(z)}{\Im z}\leq c^2(M^2+1),\quad z\in D_M, M\ge0,
$$
it follows straightforwardly that 
$$
\lim_{\stackrel{z\longrightarrow0}{{\sphericalangle}}}
\frac{f(z)-f(\alpha)}{z-\alpha}=c
$$
(see for example \cite[Exercise 5, Chapter I]{garnett}).

Considering the classical definition of the derivative, the above 
directly implies that $\lim_{y\downarrow0}f'(\alpha+iy)=c.$
Relation \eqref{9} implies that $|f'(z)|\leq c(M^2+1)$ for $z\in
D_M$, so, by the same \cite[Exercise 5, Chapter I]{garnett},
$\displaystyle\lim_{\stackrel{z\longrightarrow0}{{\sphericalangle}}}
f'(z)=c$. This proves (1).

To prove (2), simply observe that 
$$
\left|\frac{f(\alpha+iy)-f(\alpha)}{iy}\right|^2=
\left|\frac{(\Re f(\alpha+iy)-f(\alpha))^2+(\Im f(\alpha+iy))^2}{y^2}
\right|\ge\frac{(\Im f(\alpha+iy))^2}{y^2},
$$
so that $\liminf_{z\to\alpha}\frac{\Im f(z)}{\Im z}<\infty$.
Part (2) follows now from part (1).

To prove part (3), we apply the classical mean value theorem 
to bound $\Im f(\alpha+iy)/y$. The result follows then from part (1).
\end{proof}
We feel it necessary to reiterate that no claim to 
novelty is made for this proof, and we chose to write it down
here for the sake of making the paper more self-contained.


\section{Proof of the main result}

In this section we prove Theorem \ref{Main}. The proof makes use
quite often of the results, and sometimes of the proof, of
Theorem \ref{JC}. For the sake of simplicity, we will isolate some 
elements of the proof in separate lemmas.

\begin{proof}[Proof of Theorem \ref{Main}]
For any $n\in\mathbb N$ and any state $\varphi$ on $M_n(\mathcal A)$, 
$z\mapsto\varphi(f(\alpha+zv))$ is a self-map of $\mathbb C^+$
whenever $\alpha$ is selfadjoint and $v>0$ in $M_n(\mathcal A)$.
Thus, Theorem \ref{JC} applies to it. In particular, if $\mathcal H$ is 
the Hilbert space on which the von Neumann algebra $\mathcal A$ acts,
the above holds for the vector state corresponding to any $\xi\in
\oplus_{j=1}^n\mathcal H$ of $L^2$-norm equal to one.
For $n=1$, our hypothesis guarantees that 
$\liminf_{z\to0}\frac{\langle\Im f(\alpha+zv)\xi,\xi\rangle}{\Im z}$
is finite. Item (1) of Theorem \ref{JC} guarantees that
$\lim_{y\downarrow 0}\frac{\langle\Im f(\alpha+iyv)\xi,\xi\rangle}{y}=
\lim_{y\downarrow 0}\left\|\left(\frac{\Im f(\alpha+iyv)}{y}
\right)^\frac12\xi\right\|_2^2$ exists and equals the above 
$\liminf$, hence it is finite for any $\xi\in
\mathcal H$. As in the proof of Lemma \ref{lem:3.2},
the Banach-Steinhaus Theorem and the 
positivity of the operators $\frac{\Im f(\alpha+iyv)}{{y}}$ guarantee
that $\left\{\left\|\frac{\Im f(\alpha+iyv)}{{y}}\right\|\colon 
y\in(0,1)\right\}$ is a bounded set. Moreover, the existence of 
the limits
$\lim_{y\downarrow 0}\frac{\langle\Im f(\alpha+iyv)\xi,\xi\rangle}{y}$
for all $\xi\in\mathcal H$ implies, via polarization, the existence
of 
$$
\lim_{y\downarrow 0}\frac{\langle\Im f(\alpha+iyv)\xi,\eta\rangle}{y},
\quad \xi,\eta\in\mathcal H.
$$
We conclude the existence of a bounded operator $0\le c=c(v)\in\mathcal 
A$ such that 
$$
\lim_{y\downarrow 0}\frac{\langle\Im f(\alpha+iyv)\xi,\eta\rangle}{y}=
\langle c\xi,\eta\rangle,\quad \xi,\eta\in\mathcal H.
$$
The bound for $c$ is $\|c\|\leq\limsup_{y\to0}\left\|\frac{\Im f
(\alpha+iyv)}{{y}}\right\|$. On the other hand, as seen in the proof of Theorem \ref{JC},
$\Im\langle f(\alpha+iyv)\xi,\xi\rangle\leq y\langle c\xi,\xi\rangle$ for all
$y>0$. Since $f$ takes values in $H^+(\mathcal A)$, applying this relation to
$y=1$ guaranteres that $c>0$. Now it follows easily that 
$\lim_{y\downarrow0}\left\|\left(\frac{\Im f(\alpha+iyv)}{y}-c\right)\xi\right\|=0$
for any $\xi\in\mathcal H$.

We show next that the limit
$\lim_{y\downarrow 0}f(\alpha+iyv)=f(\alpha)$
exists in $\mathcal A$ (i.e. does not depend on $v$) and is 
selfadjoint. Indeed, consider again any state $\varphi$ on $\mathcal A
$ and define $z\mapsto\varphi(f(\alpha+zv))$. We have seen that this is 
a self-map of $\mathbb C^+$ to which Theorem \ref{JC} applies.
Thus, there exists a number $k=k(\varphi,\alpha,v)
\in\mathbb R$ such that $\displaystyle\lim_{\stackrel{z\longrightarrow
0}{\sphericalangle}}\varphi(f(\alpha+zv))=k.$
We recall the estimate from Proposition \ref{prop:3.1} 
$$
\left|
\frac{\varphi(f(\alpha+zv))-\varphi(f(\alpha+z'v))}{z-z'}\right|^2
\leq\frac{\varphi(\Im f(\alpha+zv))\varphi(\Im f(\alpha+z'v))}{\Im z
\Im z'}.
$$
In this estimate we take $z'=i$ and let $z=iy$ tend to zero. We obtain
$$
\left|k(\varphi,\alpha,v)-\varphi(f(\alpha+iv))\right|^2
\leq\varphi(c)\varphi(\Im f(\alpha+iv)).
$$
Obviously, $|\varphi(f(\alpha+iv))|\leq\|f(\alpha+iv)\|,$
a value independent of $\varphi$. Thus,
$$
|k(\varphi,\alpha,v)|\leq\|f(\alpha+iv)\|+\sqrt{\|c\|
\|\Im f(\alpha+iv)\|},
$$
for any state $\varphi$ on $\mathcal A$. By applying 
as before this result to vector states and using polarization, 
we find an operator $f_v(\alpha)\in\mathcal A$ such that
$$
\langle f_v(\alpha)\xi,\eta\rangle=\lim_{y\downarrow 0}
\langle f(\alpha+iv)\xi,\eta\rangle,\quad\xi,\eta\in\mathcal H.
$$
Since $\|x\|=\sup\{|\varphi(x)|\colon\varphi\text{ state on }
\mathcal A\}$, the estimate 
$$
\|f_v(\alpha)\|\leq4\left(\|f(\alpha+iv)\|+\sqrt{\|c\|
\|\Im f(\alpha+iv)\|}\right)
$$
holds. Since for any state $\varphi$, $k(\varphi,\alpha,v)=
\lim_{y\downarrow0}\varphi(f(\alpha+iyv))\in\mathbb R$, it follows that
$f_v(\alpha)=f_v(\alpha)^*$. The fact that $f_v(\alpha)$
does not depend on $v$ follows from
Proposition \ref{prop:3.1} and Lemma \ref{lem:3.2}: indeed,
\begin{eqnarray*}
\lefteqn{
\left\|\left(\Im f(\alpha+iy_1v)\right)^{-\frac12}
(f(\alpha+iy_1v)-f(\alpha+iy_21))\left(\Im f(\alpha+iy_21)\right)^{-\frac12}
\right\|}\\
& \leq &\left\|\left(y_1v\right)^{-\frac12}
(iy_1v-iy_21)\left(y_21)\right)^{-\frac12}
\right\|\quad\quad\quad\quad\quad\quad\quad\quad
\end{eqnarray*}
is equivalent to 
\begin{eqnarray*}
\lefteqn{
\left\|\left(\frac{\Im f(\alpha+iy_1v)}{y_1}\right)^{-\frac12}
(f(\alpha+iy_1v)-f(\alpha+iy_21))\left(\frac{\Im f(\alpha+iy_21)}{y_2}\right)^{-\frac12}
\right\|}\\
& \leq & \left\|v^{-\frac12}\right\|
\left\|y_1v-y_21\right\|.\quad\quad\quad\quad\quad\quad\quad\quad\quad\quad\quad\quad\quad\quad\quad\quad\quad
\end{eqnarray*}
We obtain as in the proof of Lemma \ref{lem:3.2}
\begin{eqnarray}
\lefteqn{
\|f(\alpha+iy_1v)-f(\alpha+iy_21)\|}\nonumber\\
& \leq & \left\|v^{-\frac12}\right\|
\left\|y_1v-y_21\right\|\sqrt{\left\|\frac{\Im f(\alpha+iy_1v)}{y_1}\right\|
\left\|\frac{\Im f(\alpha+iy_21)}{y_2}\right\|}.\label{10}
\end{eqnarray}
The two factors under the square root are bounded by hypothesis. Thus, we conclude.

\begin{remark}
This result is similar to results in \cite{AMcY,Fan,Wlo}.
We observe that this essentially improves the convergence 
to norm convergence, without requiring norm convergence 
in formula \eqref{5}.
\end{remark}

In the classical Julia-Carath\'eodory Theorem, we noted also that 
$(\Re f(\alpha+iy)-f(\alpha))/y\to0$ as $y\searrow0$. A similar result
holds for general noncommuttive functions. Indeed, using relation
\eqref{est''} with $a=\alpha+iyv,c=\alpha+iy'v$, $b=a-c$ we obtain
$$
\left(f(\alpha+iyv)-f(\alpha+iy'v)\right)^*\left(\Im f(\alpha+iyv)\right)^{-1}
\left(f(\alpha+iyv)-f(\alpha+iy'v)\right)
$$
$$
\leq\frac{(y-y')^2}{yy'}\Im f(\alpha+iy'v).
$$
Letting $y'\searrow0$ we obtain (with the notation from the statement of Theorem \ref{Main})
$$
\left(f(\alpha+iyv)-f(\alpha)\right)^*\left(\Im f(\alpha+iyv)\right)^{-1}
\left(f(\alpha+iyv)-f(\alpha)\right)\leq yc(v).
$$
Recalling that $f(\alpha)=f(\alpha)^*$ we conclude that 
$$
\left(\Re f(\alpha+iyv)-f(\alpha)\right)\left(\Im f(\alpha+iyv)\right)^{-1}
\left(\Re f(\alpha+iyv)-f(\alpha)\right)\leq yc(v)-\Im f(\alpha+iyv).
$$
We divide by $y$ and let $y\searrow0$ to conclude that
\begin{equation}\label{Re0}
0\leq\lim_{y\downarrow0}\frac{\Re f(\alpha+iyv)-f(\alpha)}{y}\left(\frac{\Im f(\alpha+iyv)}{y}\right)^{-1}
\frac{\Re f(\alpha+iyv)-f(\alpha)}{y}\leq0.
\end{equation}
The invertibility of $c(v)$ guarantees that $\lim_{y\downarrow0}\frac{\Re f(\alpha+iyv)-f(\alpha)}{y}=0$
in the so-topology. Thus, 
${\displaystyle\lim_{\stackrel{ z\longrightarrow0}{{
\sphericalangle}}}}\frac{\Re f(\alpha+zv)-f(\alpha)}{\Im z}=0$.

In order to extend the above result to all levels $n$,
we need the following lemma.

\begin{lemma}\label{lem:4.2}
Let $f$ be as in Theorem \ref{Main}. Fix $\alpha=\alpha^*\in\mathcal A$, $v_1,v_2>0$ in 
$\mathcal A$, and $b\in\mathcal A$ of norm $\|b\|^2\cdot1<v_2\|v_1^{-1}\|^{-1}$.
Then 
$$
\left\{\frac1y\left\|f\left(\begin{array}{cc}
\alpha+iyv_1 & iyb \\
0 & \alpha+iyv_2
\end{array}\right)-
f\left(\begin{array}{cc}
\alpha+iyv_1 & \frac{iyb}{2} \\
\frac{iyb^*}{2} & \alpha+iyv_2
\end{array}\right)
\right\|\colon y\in(0,1)\right\}
$$
is bounded
\end{lemma}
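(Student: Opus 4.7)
The first observation is that the two matrices in the statement,
\[
A_1 := \begin{pmatrix}\alpha+iyv_1 & iyb \\ 0 & \alpha+iyv_2\end{pmatrix} \quad\text{and}\quad A_2 := \begin{pmatrix}\alpha+iyv_1 & iyb/2 \\ iyb^*/2 & \alpha+iyv_2\end{pmatrix},
\]
share the same imaginary part $\Im A_1 = \Im A_2 = yV$, where $V := \left(\begin{smallmatrix}v_1 & b/2 \\ b^*/2 & v_2\end{smallmatrix}\right)$. The hypothesis $\|b\|^2\cdot 1 < v_2\|v_1^{-1}\|^{-1}$ gives $b^*v_1^{-1}b \leq \|v_1^{-1}\|\|b\|^2\cdot 1 < v_2$, so the Schur-complement criterion from Section~2 forces $V > 0$, and hence both $A_1, A_2 \in H^+(M_2(\mathcal A))$ for every $y > 0$. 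A direct computation also gives $A_1 - A_2 = yB_0$, where $B_0 := \left(\begin{smallmatrix}0 & ib/2 \\ -ib^*/2 & 0\end{smallmatrix}\right) = B_0^*$ is a fixed element of $M_2(\mathcal A)$ independent of $y$.

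Applying Proposition~\ref{prop:3.1} to $A_1, A_2$ with increment $A_1 - A_2 = yB_0$, the factors of $y$ cancel:
\[
\bigl\|(\Im A_1)^{-1/2}(A_1-A_2)(\Im A_2)^{-1/2}\bigr\| = \|(yV)^{-1/2}(yB_0)(yV)^{-1/2}\| = \|V^{-1/2}B_0V^{-1/2}\| =: K,
\]
a constant independent of $y$. Rearranging the Schwarz--Pick estimate produces the master inequality
\[
\|f(A_1) - f(A_2)\| \leq K\,\|\Im f(A_1)\|^{1/2}\,\|\Im f(A_2)\|^{1/2}. \qquad(\ast)
\]
The lemma therefore reduces to showing $\|\Im f(A_j)\| = O(y)$ as $y \downarrow 0$ for $j = 1, 2$.

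For $A_1$ I exploit its upper-triangular form: formula~\eqref{FDQ} yields
\[
f(A_1) = \begin{pmatrix} f(\alpha+iyv_1) & iy\,\Delta f(\alpha+iyv_1, \alpha+iyv_2)(b) \\ 0 & f(\alpha+iyv_2) \end{pmatrix},
\]
so the diagonal blocks of $\Im f(A_1)$ are $\Im f(\alpha + iyv_j)$, each $O(y)$ by the level-$1$ portion of the proof of Theorem~\ref{Main}(1) established just above (Banach--Steinhaus applied to $y^{-1}\langle\Im f(\alpha+iyv_j)\xi,\xi\rangle$), while the off-diagonal blocks are $\pm\frac{y}{2}\Delta f(\alpha+iyv_1,\alpha+iyv_2)(b)$, each $O(y)$ by Lemma~\ref{lem:3.2} applied with $w = b$. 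A crude block-norm bound then gives $\|\Im f(A_1)\| = O(y)$.

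For $A_2$, upper-triangularity is absent and \eqref{FDQ} is unavailable; instead I bootstrap directly from $(\ast)$. Since $\|\Im X\| \leq \|X\|$,
\[
\|\Im f(A_2)\| \leq \|\Im f(A_1)\| + \|f(A_1)-f(A_2)\| \leq \|\Im f(A_1)\| + K\,\|\Im f(A_1)\|^{1/2}\,\|\Im f(A_2)\|^{1/2}.
\]
Writing $u := \|\Im f(A_2)\|^{1/2}$ and $v := \|\Im f(A_1)\|^{1/2}$, this reads $u^2 - Kvu - v^2 \leq 0$, yielding $u \leq v\cdot\frac{K+\sqrt{K^2+4}}{2}$ and hence $\|\Im f(A_2)\| = O(\|\Im f(A_1)\|) = O(y)$. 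Substituting back into $(\ast)$ produces $\|f(A_1) - f(A_2)\| = O(y)$, which is exactly the required boundedness of $y^{-1}\|f(A_1) - f(A_2)\|$ on $(0,1)$. The main obstacle is precisely this bootstrap: because $A_2$ is not upper-triangular, its imaginary part admits no direct lower-level formula, and it must be extracted from $(\ast)$ itself together with the (pleasant) coincidence that $\Im A_1 = \Im A_2$.
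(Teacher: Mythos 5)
Your proof is correct, and it rests on the same geometric core as the paper's: apply Proposition~\ref{prop:3.1} to the pair $A_1,A_2$, observe that $\Im A_1=\Im A_2=yV$ so that the domain-side quantity $\|(\Im A_1)^{-1/2}(A_1-A_2)(\Im A_2)^{-1/2}\|=\|V^{-1/2}B_0V^{-1/2}\|=K$ is $y$-independent, then bound $\|\Im f(A_1)\|=O(y)$ via the upper-triangular structure, formula \eqref{FDQ}, the boundedness of $y^{-1}\Im f(\alpha+iyv_j)$, and Lemma~\ref{lem:3.2}. Where you genuinely depart from the paper is in how the unknown $\|\Im f(A_2)\|$ is controlled. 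You close the loop in one step: combining $\|f(A_1)-f(A_2)\|\leq K\|\Im f(A_1)\|^{1/2}\|\Im f(A_2)\|^{1/2}$ with $\|\Im f(A_2)\|\leq\|\Im f(A_1)\|+\|f(A_1)-f(A_2)\|$ gives the scalar quadratic inequality $u^2-Kvu-v^2\leq0$ in $u=\|\Im f(A_2)\|^{1/2}$, $v=\|\Im f(A_1)\|^{1/2}$, whence $\|\Im f(A_2)\|=O(\|\Im f(A_1)\|)=O(y)$, and substituting back yields $\|f(A_1)-f(A_2)\|=O(y)$. The paper instead runs a three-stage contradiction argument based on the same inequality \eqref{12}: first showing $y\|\Im f(A_2)\|\to0$ (otherwise a vector state would force $\|\Im\mathfrak D\|$ to be simultaneously bounded by \eqref{12} and unbounded), then that $\|\Im f(A_2)\|\to0$, then dividing \eqref{12} by $y^2$ and extracting a contradiction along a diverging subsequence to conclude $y^{-1}\|\Im f(A_2)\|$ stays bounded. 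Your bootstrap eliminates the subsequence manipulations and the intermediate claims entirely; since both arguments begin from the identical Schwarz--Pick estimate, the gain is purely in the elementary real-variable analysis, where your route is shorter and more transparent.
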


\begin{proof}
Observe that $\|b\|^21<4\|v_1^{-1}\|^{-1}v_2$ implies $\Im\left(\begin{array}{cc}
\alpha+iyv_1 & iyb \\
0 & \alpha+iyv_2
\end{array}\right)>0$ for all $y>0$. 
We use the same trick as in Lemma \ref{lem:3.2}. For simplicity,
denote 
$$
\mathfrak D=f\left(\begin{array}{cc}
\alpha+iyv_1 & iyb \\
0 & \alpha+iyv_2
\end{array}\right)-
f\left(\begin{array}{cc}
\alpha+iyv_1 & \frac{iyb}{2} \\
\frac{iyb^*}{2} & \alpha+iyv_2
\end{array}\right).
$$
Proposition \ref{prop:3.1} (in the guise of inequality \eqref{est}) applied to $a$ and $c$ equal to the two 
arguments of the function $f$ in the formula of $\mathfrak D$ above give
\begin{eqnarray*}
\lefteqn{
\Im f\left(\left(\begin{array}{cc}
\alpha & 0 \\
0 & \alpha
\end{array}\right)+iy
\left(\begin{array}{cc}
v_1 & {b} \\
0 & v_2
\end{array}\right)\right)^{-\frac12}
\mathfrak D^*\Im f\left(\left(\begin{array}{cc}
\alpha & 0 \\
0 & \alpha
\end{array}\right)+iy
\left(\begin{array}{cc}
v_1 & \frac{b}{2} \\
\frac{b^*}{2} & v_2
\end{array}\right)\right)^{-1}}\\
& & \mbox{}\times
\mathfrak D
\Im f\left(\left(\begin{array}{cc}
\alpha & 0 \\
0 & \alpha
\end{array}\right)+iy
\left(\begin{array}{cc}
v_1 & {b} \\
0 & v_2
\end{array}\right)\right)^{-\frac12}\\
& \leq & \left\|\left(\begin{array}{cc}
yv_1 & \frac{yb}{2} \\
\frac{yb^*}{2} & yv_2
\end{array}\right)^{-\frac12}
\left(\begin{array}{cc}
0 & \frac{-iyb}{2} \\
\frac{iyb^*}{2} & 0
\end{array}\right)
\left(\begin{array}{cc}
yv_1 & \frac{yb}{2} \\
\frac{yb^*}{2} & yv_2
\end{array}\right)^{-\frac12}
\right\|^2\cdot1_{M_2(\mathcal A)},
\quad\quad\quad\quad
\end{eqnarray*}
for all $y>0$ (we have kept the $y$'s on the right hand side for transparency of the method).
As in the proof of Lemma \ref{lem:3.2}, we ``multiply out'' the imaginary parts of $f$ on the left to
obtain
\begin{eqnarray*}
\lefteqn{\mathfrak D\mathfrak D^*\leq\|\mathfrak D\|^21}\\
& \leq & \left\|y\Im f\left(\left(\begin{array}{cc}
\alpha & 0 \\
0 & \alpha
\end{array}\right)+iy
\left(\begin{array}{cc}
v_1 & \frac{b}{2} \\
\frac{b^*}{2} & v_2
\end{array}\right)\right)\right\|
\left\|\left(\begin{array}{cc}
v_1 & \frac{b}{2} \\
\frac{b^*}{2} & v_2
\end{array}\right)^{-\frac12}
\left(\begin{array}{cc}
0 & \frac{-ib}{2} \\
\frac{ib^*}{2} & 0
\end{array}\right)\right.\\
& & \mbox{}\times\left.
\left(\begin{array}{cc}
v_1 & \frac{b}{2} \\
\frac{b^*}{2} & v_2
\end{array}\right)^{-\frac12}
\right\|^2
\left\|\frac1y\Im f\left(\left(\begin{array}{cc}
\alpha & 0 \\
0 & \alpha
\end{array}\right)+iy
\left(\begin{array}{cc}
v_1 & {b} \\
0 & v_2
\end{array}\right)\right)
\right\|\cdot1.
\end{eqnarray*}
The last factor on the right hand side is bounded by the hypothesis, formula 
\eqref{FDQ}, Lemma \ref{lem:3.2} and the above arguments. The first factor needs 
not apriori tend to zero, but it is clearly bounded. 
However, if this factor is nonzero, 
consider $\mathcal H$ to be the Hilbert space on which $\mathcal A$ acts as a 
von Neumann algebra. Then there exists a vector $\xi\in\mathcal H^2$ of norm 
one such that 
$\lim_{y\downarrow0}y\varphi_\xi\left(
\Im f\left(\left(\begin{array}{cc}
\alpha & 0 \\
0 & \alpha
\end{array}\right)+iy
\left(\begin{array}{cc}
v_1 & \frac{b}{2} \\
\frac{b^*}{2} & v_2
\end{array}\right)\right)\right)$ exists and belongs to $(0,+\infty)$, so that necessarily 
$$
\left\|\Im f\left(\left(\begin{array}{cc}
\alpha & 0 \\
0 & \alpha
\end{array}\right)+iy
\left(\begin{array}{cc}
v_1 & \frac{b}{2} \\
\frac{b^*}{2} & v_2
\end{array}\right)\right)\right\|\to+\infty,\quad y\to0.
$$
(Recall that we have denoted by $\varphi_\xi$ the vector state corresponding to $\xi$: $\varphi_\xi(a)
=\langle a\xi,\xi\rangle$.) But then
$2\|\Im\mathfrak D\|=\|\mathfrak D-\mathfrak D^*\|\le2\|\mathfrak D\|$ is unbounded as $y$
tends to zero, so that
\begin{eqnarray}
\lefteqn{\|\Im\mathfrak D\|^2\leq\|\mathfrak D\|^2}\nonumber\\
& \leq & \left\|y\Im f\left(\left(\begin{array}{cc}
\alpha & 0 \\
0 & \alpha
\end{array}\right)+iy
\left(\begin{array}{cc}
v_1 & \frac{b}{2} \\
\frac{b^*}{2} & v_2
\end{array}\right)\right)\right\|
\left\|\left(\begin{array}{cc}
v_1 & \frac{b}{2} \\
\frac{b^*}{2} & v_2
\end{array}\right)^{-\frac12}\nonumber
\left(\begin{array}{cc}
0 & \frac{-ib}{2} \\
\frac{ib^*}{2} & 0
\end{array}\right)\right.\\
& & \mbox{}\times\left.
\left(\begin{array}{cc}
v_1 & \frac{b}{2} \\
\frac{b^*}{2} & v_2
\end{array}\right)^{-\frac12}
\right\|^2
\left\|\frac1y\Im f\left(\left(\begin{array}{cc}
\alpha & 0 \\
0 & \alpha
\end{array}\right)+iy
\left(\begin{array}{cc}
v_1 & {b} \\
0 & v_2
\end{array}\right)\right)
\right\|,\label{12}
\end{eqnarray}
making the right hand side unbounded, a contradiction. Thus, 
$$
\lim_{y\to0}\left\|y\Im f\left(\left(\begin{array}{cc}
\alpha & 0 \\
0 & \alpha
\end{array}\right)+iy
\left(\begin{array}{cc}
v_1 & \frac{b}{2} \\
\frac{b^*}{2} & v_2
\end{array}\right)\right)\right\|=0,
$$
so, by a second application of inequality \eqref{12},
$$
\lim_{y\to0}\left\|\Im f\left(\left(\begin{array}{cc}
\alpha & 0 \\
0 & \alpha
\end{array}\right)+iy
\left(\begin{array}{cc}
v_1 & \frac{b}{2} \\
\frac{b^*}{2} & v_2
\end{array}\right)\right)\right\|=0.
$$
However, more can be concluded from \eqref{12}: dividing by $y^2$, one obtains
\begin{eqnarray*}
\lefteqn{\frac{\|\Im \mathfrak D\|^2}{y^2}=\left\|\frac1y\Im f\left(\begin{array}{cc}
\alpha+iyv_1 & iyb \\
0 & \alpha+iyv_2
\end{array}\right)-
\frac1y\Im f\left(\begin{array}{cc}
\alpha+iyv_1 & \frac{iyb}{2} \\
\frac{iyb^*}{2} & \alpha+iyv_2
\end{array}\right)\right\|^2}\\
& \leq & \left\|\frac1y\Im f\left(\left(\begin{array}{cc}
\alpha & 0 \\
0 & \alpha
\end{array}\right)+iy
\left(\begin{array}{cc}
v_1 & \frac{b}{2} \\
\frac{b^*}{2} & v_2
\end{array}\right)\right)\right\|
\left\|\left(\begin{array}{cc}
v_1 & \frac{b}{2} \\
\frac{b^*}{2} & v_2
\end{array}\right)^{-\frac12}\nonumber
\left(\begin{array}{cc}
0 & \frac{-ib}{2} \\
\frac{ib^*}{2} & 0
\end{array}\right)\right.\\
& & \mbox{}\times\left.
\left(\begin{array}{cc}
v_1 & \frac{b}{2} \\
\frac{b^*}{2} & v_2
\end{array}\right)^{-\frac12}
\right\|^2
\left\|\frac1y\Im f\left(\left(\begin{array}{cc}
\alpha & 0 \\
0 & \alpha
\end{array}\right)+iy
\left(\begin{array}{cc}
v_1 & {b} \\
0 & v_2
\end{array}\right)\right)
\right\|.
\end{eqnarray*}
We know from our hypothesis and Lemma \ref{lem:3.2} that the set of real positive numbers
$\left\{\left\|\frac1y\Im f\left(\left(\begin{array}{cc}
\alpha+iyv_1 & iyb \\
0 & \alpha+iyv_2
\end{array}\right)\right)\right\|\colon y\in(0,1)\right\}$ is bounded. If we assume that the set
$\left\{\left\|
\frac1y\Im f\left(\left(\begin{array}{cc}
\alpha+iyv_1 & \frac{iyb}{2} \\
\frac{iyb^*}{2} & \alpha+iyv_2
\end{array}\right)\right)\right\|\colon y\in(0,1)\right\}$ is unbounded and choose a sequence
$\{y_n\}_{n\in\mathbb N}$ converging to zero so that the strictly positive real number
$$
\ell:=\lim_{n\to\infty}\left\|\frac{1}{y_n}\Im f\left(\left(\begin{array}{cc}
\alpha+iy_nv_1 & iy_nb \\
0 & \alpha+iy_nv_2
\end{array}\right)\right)\right\|\text{ exists, and}
$$
$$
\lim_{n\to\infty}\left\|
\frac{1}{y_n}\Im f\left(\left(\begin{array}{cc}
\alpha+iy_nv_1 & \frac{iy_nb}{2} \\
\frac{iy_nb^*}{2} & \alpha+iy_nv_2
\end{array}\right)\right)\right\|=+\infty,
$$
then
\begin{eqnarray*}
\lefteqn{\left\|
\frac1{y_n}\Im f\left(\left(\begin{array}{cc}
\alpha+iy_nv_1 & \frac{iy_nb}{2} \\
\frac{iy_nb^*}{2} & \alpha+iy_nv_2
\end{array}\right)\right)\right\|-
\left\|\frac{1}{y_n}\Im f\left(\left(\begin{array}{cc}
\alpha+iy_nv_1 & iy_nb \\
0 & \alpha+iy_nv_2
\end{array}\right)\right)\right\|}\\
& \leq & \left\|\frac{1}{y_n}\Im f\left(\left(\begin{array}{cc}
\alpha & 0 \\
0 & \alpha
\end{array}\right)+iy_n
\left(\begin{array}{cc}
v_1 & \frac{b}{2} \\
\frac{b^*}{2} & v_2
\end{array}\right)\right)\right\|^\frac12
\left\|\left(\begin{array}{cc}
v_1 & \frac{b}{2} \\
\frac{b^*}{2} & v_2
\end{array}\right)^{-\frac12}\nonumber
\left(\begin{array}{cc}
0 & \frac{-ib}{2} \\
\frac{ib^*}{2} & 0
\end{array}\right)\right.\\
& & \mbox{}\times\left.
\left(\begin{array}{cc}
v_1 & \frac{b}{2} \\
\frac{b^*}{2} & v_2
\end{array}\right)^{-\frac12}
\right\|
\left\|\frac{1}{y_n}\Im f\left(\left(\begin{array}{cc}
\alpha & 0 \\
0 & \alpha
\end{array}\right)+iy_n
\left(\begin{array}{cc}
v_1 & {b} \\
0 & v_2
\end{array}\right)\right)
\right\|^\frac12
\end{eqnarray*}
becomes
\begin{eqnarray*}
\lefteqn{\left\|
\frac{\Im f\left(\left(\begin{array}{cc}
\alpha+iy_nv_1 & \frac{iy_nb}{2} \\
\frac{iy_nb^*}{2} & \alpha+iy_nv_2
\end{array}\right)\right)}{y_n}\right\|^\frac12-\frac{
\left\|\frac{1}{y_n}\Im f\left(\left(\begin{array}{cc}
\alpha+iy_nv_1 & iy_nb \\
0 & \alpha+iy_nv_2
\end{array}\right)\right)\right\|}{
\left\|
\frac1{y_n}\Im f\left(\left(\begin{array}{cc}
\alpha+iy_nv_1 & \frac{iy_nb}{2} \\
\frac{iy_nb^*}{2} & \alpha+iy_nv_2
\end{array}\right)\right)\right\|^\frac12}}\\
& \leq & 
\left\|\left(\begin{array}{cc}
v_1 & \frac{b}{2} \\
\frac{b^*}{2} & v_2
\end{array}\right)^{-\frac12}
\left(\begin{array}{cc}
0 & \frac{-ib}{2} \\
\frac{ib^*}{2} & 0
\end{array}\right)
\left(\begin{array}{cc}
v_1 & \frac{b}{2} \\
\frac{b^*}{2} & v_2
\end{array}\right)^{-\frac12}
\right\|\\
& & \mbox{}\times
\left\|\frac{1}{y_n}\Im f\left(\left(\begin{array}{cc}
\alpha & 0 \\
0 & \alpha
\end{array}\right)+iy_n
\left(\begin{array}{cc}
v_1 & {b} \\
0 & v_2
\end{array}\right)\right)
\right\|^\frac12;\quad\quad\quad\quad\quad\quad\quad\quad\quad\quad\quad\quad\quad
\end{eqnarray*}
by letting $n\to\infty$, we obtain 
$$
\infty-0\leq\ell\left\|\left(\begin{array}{cc}
v_1 & \frac{b}{2} \\
\frac{b^*}{2} & v_2
\end{array}\right)^{-\frac12}
\left(\begin{array}{cc}
0 & \frac{-ib}{2} \\
\frac{ib^*}{2} & 0
\end{array}\right)
\left(\begin{array}{cc}
v_1 & \frac{b}{2} \\
\frac{b^*}{2} & v_2
\end{array}\right)^{-\frac12}
\right\|,
$$ an obvious contradiction. We have thus shown that $\|\Im \mathfrak D\|/y$ stays
bounded as $y\searrow0$.
By relation \eqref{12}, the same holds for $\Re\mathfrak D.$ This proves the lemma.
\end{proof}

The previous lemma implies more: since $\left\|\frac1y\Im f\left(\left(\begin{array}{cc}
\alpha+iyv_1 & iyb \\
0 & \alpha+iyv_2
\end{array}\right)\right)\right\|$
is bounded as $y\in(0,1),$ it follows immediately 
from the lemma that
$$
\liminf_{y\downarrow0}\frac{1}{y}\varphi\left(\Im f\left(\left(\begin{array}{cc}
\alpha+iyv_1 & \frac{iyb}{2} \\
\frac{iyb^*}{2} & \alpha+iyv_2
\end{array}\right)\right)\right)<\infty, 
$$
for all states $\varphi$ on $M_2(\mathcal A)$, and so, as proved above,
\begin{equation}\label{2x2}
{\rm so-}\lim_{y\downarrow0}\frac{1}{y}\Im f\left(\left(\begin{array}{cc}
\alpha+iyv_1 & \frac{iyb}{2} \\
\frac{iyb^*}{2} & \alpha+iyv_2
\end{array}\right)\right):=C>0 \text{ in }M_2(\mathcal A).
\end{equation}
In particular, it follows that the finiteness of the liminf in 
\eqref{5} guarantees the boundedness of the sets 
$\Im f(\alpha\otimes 1_n+iyv)/y, y\in(0,1),$ for all $n\in\mathbb N$,
$v>0$ in $M_n(\mathcal A)$, and so the existence of the
corresponding so-limits for all $n$, as well as the norm-convergence of
$f(\alpha\otimes 1_n+zv)$ to $f(\alpha)\otimes 1_n$ as $z\to0$ nontangentially.

We show next the existence of the limit of $\Delta f(\alpha+iyv_1,\alpha+iyv_2)(b)$ as
$y\searrow0$. Let $v_1,v_2,b,\alpha$ be as in the above lemma. Fix $\epsilon\in(0,1)$
and denote $V_\epsilon=\left(\begin{array}{cc}
1 & 0 \\
0 & \sqrt\epsilon
\end{array}\right)$.
Observe that
$$
V_\epsilon^{-1}\left(\begin{array}{cc}
\alpha+iyv_1 & {iyb} \\
iy\epsilon b^* & \alpha+iyv_2
\end{array}\right)V_\epsilon=\left(\begin{array}{cc}
\alpha+iyv_1 & {i\sqrt\epsilon yb} \\
i\sqrt\epsilon yb^* & \alpha+iyv_2
\end{array}\right),
$$
so that, by the definition of a noncommutative function,
$$
f\left(\begin{array}{cc}
\alpha+iyv_1 & {iyb} \\
iy\epsilon b^* & \alpha+iyv_2
\end{array}\right)=V_\epsilon f\left(\begin{array}{cc}
\alpha+iyv_1 & {i\sqrt\epsilon yb} \\
i\sqrt\epsilon yb^* & \alpha+iyv_2
\end{array}\right)
V_\epsilon^{-1},
$$
The methods used in the proof of Lemma \ref{lem:4.2} allow for 
an estimate of the form
\begin{eqnarray*}
\lefteqn{\frac{1}{y^2}\left\|f\left(\begin{array}{cc}
\alpha+iyv_1 & {i\sqrt\epsilon yb} \\
i\sqrt\epsilon yb^* & \alpha+iyv_2
\end{array}\right)-\left(\begin{array}{cc}
f(\alpha) & 0 \\
0 & f(\alpha)
\end{array}\right)
\right\|^2}\\
& \leq & 
\left\|\left(\begin{array}{cc}
v_1 & 0 \\
0 & v_2
\end{array}\right)^{-\frac12}\left(\begin{array}{cc}
iv_1 & {i\sqrt\epsilon b} \\
i\sqrt\epsilon b^* & iv_2
\end{array}\right)
\left(\begin{array}{cc}
v_1 & \frac{\sqrt\epsilon b}{2} \\
\frac{\sqrt\epsilon b^*}{2} & v_2
\end{array}\right)^{-\frac12}\right\|^2\\
& & \mbox{}\times\left\|
\left(\begin{array}{cc}
c(v_1) & 0 \\
0 & c(v_2)
\end{array}\right)
\right\|
\left\|
\frac1y\Im f\left(\begin{array}{cc}
\alpha+iyv_1 & {i\sqrt\epsilon yb} \\
i\sqrt\epsilon yb^* & \alpha+iyv_2
\end{array}\right)
\right\|.
\end{eqnarray*}
If we denote $C_\epsilon:=\lim_{y\downarrow0}
\frac1y\Im f\left(\begin{array}{cc}
\alpha+iyv_1 & {i\sqrt\epsilon yb} \\
i\sqrt\epsilon yb^* & \alpha+iyv_2
\end{array}\right)$, the above allows us to conclude that
$$
\|C_\epsilon\|\leq\left\|\left(\begin{array}{cc}
v_1 & 0 \\
0 & v_2
\end{array}\right)^{-\frac12}\left(\begin{array}{cc}
v_1 & {\sqrt\epsilon b} \\
\sqrt\epsilon b^* & v_2
\end{array}\right)
\left(\begin{array}{cc}
v_1 & \frac{\sqrt\epsilon b}{2} \\
\frac{\sqrt\epsilon b^*}{2} & v_2
\end{array}\right)^{-\frac12}\right\|^2\max_{1\le j\le 2}\|c(v_j)\|.
$$
Thus, for any $\epsilon\in(0,1)$ we have $\|C_\epsilon\|\leq\textrm{const}(v_1,v_2,b)$.
However, a bit more can be obtained: since conjugation by $V_\epsilon$ 
does not affect diagonal elements, we have 
\begin{eqnarray*}
\lefteqn{\frac{1}{y^2}\left\|f\left(\begin{array}{cc}
\alpha+iyv_1 & {i yb} \\
i\epsilon yb^* & \alpha+iyv_2
\end{array}\right)-\left(\begin{array}{cc}
f(\alpha) & 0 \\
0 & f(\alpha)
\end{array}\right)
\right\|^2}\\
& = & \frac{1}{y^2}\left\|V_\epsilon \left(f\left(\begin{array}{cc}
\alpha+iyv_1 & {i\sqrt\epsilon yb} \\
i\sqrt\epsilon yb^* & \alpha+iyv_2
\end{array}\right)-\left(\begin{array}{cc}
f(\alpha) & 0 \\
0 & f(\alpha)
\end{array}\right)\right)V_\epsilon^{-1}
\right\|^2\\
& \leq & \frac1\epsilon
\left\|\left(\begin{array}{cc}
v_1 & 0 \\
0 & v_2
\end{array}\right)^{-\frac12}\left(\begin{array}{cc}
iv_1 & {i\sqrt\epsilon b} \\
i\sqrt\epsilon b^* & iv_2
\end{array}\right)
\left(\begin{array}{cc}
v_1 & \frac{\sqrt\epsilon b}{2} \\
\frac{\sqrt\epsilon b^*}{2} & v_2
\end{array}\right)^{-\frac12}\right\|^2\\
& & \mbox{}\times\left\|
\left(\begin{array}{cc}
c(v_1) & 0 \\
0 & c(v_2)
\end{array}\right)
\right\|
\left\|
\frac1y\Im f\left(\begin{array}{cc}
\alpha+iyv_1 & {i\sqrt\epsilon yb} \\
i\sqrt\epsilon yb^* & \alpha+iyv_2
\end{array}\right)
\right\|,
\end{eqnarray*}
as $\|V_\epsilon\|=1,\|V_\epsilon^{-1}\|=\epsilon^{-1/2}$. 
The existence of the limit 
$$
\ell_\epsilon:=\lim_{y\downarrow0}\frac1y\left[
f\left(\begin{array}{cc}
\alpha+iyv_1 & {i\sqrt\epsilon yb} \\
i\sqrt\epsilon yb^* & \alpha+iyv_2
\end{array}\right)-\left(\begin{array}{cc}
f(\alpha) & 0 \\
0 & f(\alpha)
\end{array}\right)\right]
$$
implies the existence of
$$
\lim_{y\downarrow0}\frac1y\left[f\left(\begin{array}{cc}
\alpha+iyv_1 & {i yb} \\
i\epsilon yb^* & \alpha+iyv_2
\end{array}\right)-\left(\begin{array}{cc}
f(\alpha) & 0 \\
0 & f(\alpha)
\end{array}\right)\right]=V_\epsilon\ell_\epsilon V_\epsilon^{-1}.
$$
Let us now continue our estimates on the derivative:
\begin{eqnarray*}
\lefteqn{\frac{1}{y^2}\left\|f\left(\begin{array}{cc}
\alpha+iyv_1 & {i yb} \\
i\epsilon yb^* & \alpha+iyv_2
\end{array}\right)-f\left(\begin{array}{cc}
\alpha+iyv_1 & iyb \\
0 & \alpha+iyv_2
\end{array}\right)
\right\|^2}\\
& \leq & 
\left\|\left(\begin{array}{cc}
yv_1 & \frac{yb}{2} \\
\frac{yb^*}{2}  & yv_2
\end{array}\right)^{-\frac12}\left(\begin{array}{cc}
0 & 0 \\
i\epsilon yb^* & 0
\end{array}\right)
\left(\begin{array}{cc}
yv_1 & \frac{(1+\epsilon)yb}{2} \\
\frac{(1+\epsilon)y b^*}{2} & yv_2
\end{array}\right)^{-\frac12}\right\|^2\\
& & \mbox{}\times\left\|
\frac1y\Im f\left(\begin{array}{cc}
\alpha+iyv_1 & iyb \\
0 & \alpha+iyv_2
\end{array}\right)
\right\|
\left\|
\frac1y\Im f\left(\begin{array}{cc}
\alpha+iyv_1 & {iyb} \\
i\epsilon yb^* & \alpha+iyv_2
\end{array}\right)
\right\|.
\end{eqnarray*}
The first factor on the right hand side is bounded 
by $\epsilon^2\textrm{const}(b,v_1,v_2)$, for a constant 
$\textrm{const}(b,v_1,v_2)\in\mathbb R$, independent of 
$y,\epsilon\in(0,1)$. The second factor has been
shown in Lemma \ref{lem:3.2} to be 
bounded uniformly in $y\in(0,1)$.
Finally, the last term is dominated, as seen above, by
$\epsilon^{-1}\textrm{const}(b,v_1,v_2)$. Thus,
$$
\frac{1}{y^2}\left\|f\left(\begin{array}{cc}
\alpha+iyv_1 & {i yb} \\
i\epsilon yb^* & \alpha+iyv_2
\end{array}\right)-f\left(\begin{array}{cc}
\alpha+iyv_1 & iyb \\
0 & \alpha+iyv_2
\end{array}\right)
\right\|^2\leq\epsilon\textrm{const}(v_1,v_2,b),
$$
for any $y,\epsilon\in(0,1)$. By weak compactness of norm-bounded sets,
any sequence tending to zero has a subsequence $\{y_n\}$ such that
${\displaystyle\lim_{n\to\infty}}\Delta f(\alpha+iy_nv_1,\alpha+iy_nv_2)(b)$
exists in the weak operator topology.
Adding and subtracting $\left(\begin{array}{cc}
f(\alpha) & 0 \\
0 & f(\alpha) 
\end{array}\right)$ under the norm in the left hand side above
and letting $y\searrow0$ along such a sequence provides 
$$
\left\|V_\epsilon\ell_\epsilon V_\epsilon^{-1}-\left(\begin{array}{cc}
c(v_1) & {\displaystyle\lim_{n\to\infty}}\Delta f(\alpha+iy_nv_1,\alpha+iy_nv_2)(b) \\
0 & c(v_2) 
\end{array}\right)
\right\|\leq\sqrt{\epsilon}\textrm{const}(v_1,v_2,b),
$$
for any fixed $\epsilon\in(0,1)$. This restricts the diameter
of the cluster set of $\Delta f(\alpha+iyv_1,\alpha+iyv_2)(b)$ at zero
to a set of norm-diameter of order $\sqrt\epsilon$ for any $\epsilon>0$.
Thus, the limit ${\displaystyle\lim_{y\downarrow0}}\Delta f(\alpha+iyv_1,\alpha+iyv_2)(b)$
must exist.

We conclude that $\displaystyle\lim_{y\downarrow0}\Delta f(\alpha+iyv_1,\alpha+iyv_2)(b)$
exists and is uniformly bounded as $b\in\mathcal A$ stays in a bounded subset of $\mathcal A$.
Clearly the limit depends linearly on $b$, since each of $\Delta f(\alpha+iyv_1,\alpha+iyv_2)(b)$
does. In particular, if $v_1=v_2=v$, $\Delta f(\alpha+iyv,\alpha+iyv)(b)=f'(\alpha+iyv)(b)$ has a 
limit as $y\to0$, as claimed in part (1) of Theorem \ref{Main}. Let now in addition $b=v/4$.
For any state $\varphi$ on $\mathcal A$ and $v>0$, $z\mapsto\varphi(f(\alpha+zv))$ is a self-map
of $\mathbb C^+$ which satisfies the conditions of Theorem \ref{JC} at $z=0$. Thus,
$\displaystyle\lim_{y\downarrow0}\varphi(f'(\alpha+iyv)(v))=
\lim_{y\downarrow0}\frac{\varphi(\Im f(\alpha+iyv))}{y}$, so that indeed
$$
\lim_{y\downarrow0}f'(\alpha+iyv)(v)=\lim_{y\downarrow0}\frac{\Im f(\alpha+iyv)}{y}=c(v)>0.
$$

Until now we have proved that the finiteness of the liminf in \eqref{5} (which is applied to elements
in $\mathcal A=M_1(\mathcal A)$) implies not only the existence of $f(\alpha)$ and of limits
of $\Delta f(\alpha+iyv_1,\alpha+iyv_2)$ as $y\downarrow0$, but also the existence and finiteness
of the liminf in \eqref{5} applied to $\alpha$ replaced by $\alpha\otimes 1_{M_2(\mathbb C)}$
and $v$ replaced by a positive in $M_2(\mathcal A)$. Obviously, we now apply the above
results to elements in $M_2(\mathcal A)$ to obtain the same conclusion for elements in 
$M_4(\mathcal A)$ and so on. This, according to \cite[Chapters 2 and 3]{ncfound}, allows us to 
conclude the proof of part (1).

We prove next part (1') of Theorem \ref{Main}. Let $v,w>0$ 
be fixed. Recall that we have shown in the proof of part (1) that 
$\displaystyle\lim_{t\downarrow0}f'(\alpha+ity_1v+ity_2w)$ exists 
pointwise. Our hypothesis that 
$$
\lim_{y_1,y_2\to0}
(\varphi(f'(\alpha+iy_1v+iy_2w)(v)),\varphi(f'(\alpha+iy_1v+iy_2w)(w)))
$$
exists and is finite for any state $\varphi$ on $\mathcal A$
implies that $f'(\alpha+iy_1v+iy_2w)(v),f'(\alpha+iy_1v+iy_2w)(w)$ have a weak limit 
as $(y_1,y_2)\downarrow(0,0)$ in $[0,1)^2\setminus\{(0,0)\}$. Note that the domain $\{(z,\zeta)\in
\mathbb C^2\colon\Im(zv+\zeta w)>0\}$ of the function 
$(z,\zeta)\mapsto\varphi(f(\alpha+zv+\zeta w))$ includes $\overline{\mathbb C^+}\times\mathbb C^+
\cup\mathbb C^+\times\overline{\mathbb C^+}$ (closures taken in $\mathbb C$).
In particular, $\{(z,0)\colon z\in\mathbb C^+\}\cup\{(0,\zeta)\colon\zeta\in\mathbb C^+\}\subset
\{(z,\zeta)\in\mathbb C^2\colon\Im(zv+\zeta w)>0\}.$
The existence of the above displayed limit thus guarantees that
$\lim_{y\downarrow0}\varphi(f'(\alpha+iyw)(v))=
\lim_{y\downarrow0}\varphi(f'(\alpha+iyv)(v))$. This means that
the limit of $f'(\alpha+iyv)$ as $y\downarrow0$ does not depend on $v$ and
is positive. Applying this same result to $M_n(\mathcal A)$ and recalling the properties
of noncommutative functions guarantee complete positivity for $f'(\alpha)$. To
conclude the proof of (1'), simply observe that $\Delta f(\alpha+iyv_1,\alpha+iyv_2)(b)-
f'(\alpha+iyv_1)(b)$ converges to zero as $y\downarrow0$.

The proof of (2) is much simpler. Indeed, the existence of the limit 
$\lim_{y\downarrow0}f'(\alpha+iyv)$ implies the existence of the limit
$\lim_{y\downarrow0}\varphi(f'(\alpha+iyv)(v))$ for all states $\varphi$ on $\mathcal A$.
An application of Theorem \ref{JC} and of parts (1) and (1') of Theorem \ref{Main}
allows us to conclude.
\end{proof}

It might be useful to note that the operator $C$ from equality \eqref{2x2} can be written in terms of the
small $c$'s form the statement of Theorem \ref{Main}, at least when
$v_1=v_2$. We use here the
condition (A) of the definition of noncommutative functions. Let $v>0$
be fixed and let $b$ be such that $v>b>0$ in $\mathcal A$.
Then
$$
\left(\begin{array}{cc}
\alpha+iyv & iyb \\
iyb & \alpha+iyv
\end{array}\right)\left(\begin{array}{ccc}
1 & 0 & 1 \\
1 & 1 & 0 
\end{array}\right)=
\left(\begin{array}{ccc}
\alpha+iy(v+b) & iyb &\alpha+iyv \\
\alpha+iy(v+b) & \alpha+iyv & iyb
\end{array}\right),
$$
which is in its own turn equal to the product
$$
\left(\begin{array}{ccc}
1 & 0 & 1 \\
1 & 1 & 0 
\end{array}\right)\left(\begin{array}{ccc}
\alpha+iy(v+b) & iyb & 0 \\
0 & \alpha+iy(v-b)  & iyb \\
0 & 0 & \alpha+iyv 
\end{array}\right).
$$
We recognize in the $2\times2$ matrix the argument of one of the 
terms involved in the statement of Lemma \ref{lem:4.2}. If we denote
$$
f\left(\begin{array}{cc}
\alpha+iyv & iyb \\
iyb & \alpha+iyv
\end{array}\right)=\left(\begin{array}{cc}
f_{11} & f_{12} \\
f_{21} & f_{22}
\end{array}\right),
$$
then condition (A) tells us that
{\small \begin{eqnarray*}
\lefteqn{\left(\begin{array}{ccc}
f_{11}+f_{12} & f_{12} & f_{11} \\
f_{21}+f_{22} & f_{22} & f_{21}
\end{array}\right)=\left(\begin{array}{ccc}
1 & 0 & 1 \\
1 & 1 & 0 
\end{array}\right)\times}\\
& & 
\left(\begin{array}{ccc}
f(\alpha+iy(v+b)) & \frac{f(\alpha+iy(v+b))-f(\alpha+iy(v-b))}{2} & \Delta^2f \\
0 & f(\alpha+iy(v-b)) & f(\alpha+iyv)-f(\alpha+iy(v-b)) \\
0 & 0 & f(\alpha+iyv)
\end{array}\right)= \\
& & \left(\begin{array}{ccc}
f(\alpha+iy(v+b)) & \frac{f(\alpha+iy(v+b))-f(\alpha+iy(v-b))}{2} & \Delta^2f+f(\alpha+iyv) \\
f(\alpha+iy(v+b)) & \frac{f(\alpha+iy(v+b))+f(\alpha+iy(v-b))}{2} & f(\alpha+iyv)-f(\alpha+iy(v-b))+
\Delta^2f
\end{array}\right),
\end{eqnarray*}}
where $\Delta^2f$ stands for $\Delta^2f(\alpha+iy(v+b),\alpha+iy(v-b),\alpha+iyv)
( iyb , iyb )$.
We obtain immediately the relations
\begin{eqnarray*}
f_{11}=f_{22}&=&\frac{f(\alpha+iy(v+b))+f(\alpha+iy(v-b))}{2}\\
f_{21}=f_{12}&=&\frac{f(\alpha+iy(v+b))-f(\alpha+iy(v-b))}{2}.
\end{eqnarray*}
It follows that, for $v_1=v_2>0$,
\begin{eqnarray*}
C & = & \lim_{y\downarrow0}\frac1y\Im f\left(\left(\begin{array}{cc}
\alpha+iy & iyb \\
iyb & \alpha+iyv
\end{array}\right)\right)\\
& = & \frac12\lim_{y\downarrow0}
\left(\begin{array}{cc}
\frac{\Im f(\alpha+iy(v+b))+\Im f(\alpha+iy(v-b))}{y}&\frac{\Im f(\alpha+iy(v+b))-\Im f(\alpha+iy(v-b))}{y} 
\\
\frac{\Im f(\alpha+iy(v+b))-\Im f(\alpha+iy(v-b))}{y}&\frac{\Im f(\alpha+iy(v+b))+\Im f(\alpha+iy(v-b))}{y}
\end{array}\right).
\end{eqnarray*}
By considering the functions $z\mapsto\varphi( f(\alpha+z(v\pm b)))$, we obtain
on the off-diagonal entries precisely $[f'(\alpha)(v+b)-f'(\alpha)(v-b)]/2$ and 
on the diagonal entries $[f'(\alpha)(v+b)+f'(\alpha)(v-b)]/2$.

Moreover, the set of elements $b\in\mathcal A$ such that
$0<b<v$ is open in the set of selfadjoints, and the set of selfadjoints is a set of
uniqueness for analytic maps. Thus, the above formulas for $f_{ij}$ hold for any
$b$ from the connected component of the domain of the maps in question (viewed
as functions of $b$).

During the inception and elaboration of this paper I had the privilege to discuss various
aspects related to it with Hari Bercovici, Victor Vinnikov and Gilles Pisier.
I thank them very much both for valuable advices and encouragements. 
I would also like to thank Marco Abate for discussions on the first draft of this
paper that motivated me to expand it. Clearly, any mistakes
are entirely mine.

\end{document}